\providecommand{\U}[1]{\protect\rule{.1in}{.1in}}
\providecommand{\U}[1]{\protect\rule{.1in}{.1in}}
\providecommand{\U}[1]{\protect\rule{.1in}{.1in}}
\providecommand{\U}[1]{\protect\rule{.1in}{.1in}}
\providecommand{\U}[1]{\protect\rule{.1in}{.1in}}
\providecommand{\U}[1]{\protect\rule{.1in}{.1in}}
\providecommand{\U}[1]{\protect\rule{.1in}{.1in}}
\providecommand{\U}[1]{\protect\rule{.1in}{.1in}}
\providecommand{\U}[1]{\protect\rule{.1in}{.1in}}
\providecommand{\U}[1]{\protect\rule{.1in}{.1in}}
\providecommand{\U}[1]{\protect\rule{.1in}{.1in}}
\providecommand{\U}[1]{\protect\rule{.1in}{.1in}}
\providecommand{\U}[1]{\protect\rule{.1in}{.1in}}
\providecommand{\U}[1]{\protect\rule{.1in}{.1in}}
\providecommand{\U}[1]{\protect\rule{.1in}{.1in}}
\providecommand{\U}[1]{\protect\rule{.1in}{.1in}}
\providecommand{\U}[1]{\protect\rule{.1in}{.1in}}
\providecommand{\U}[1]{\protect\rule{.1in}{.1in}}
\providecommand{\U}[1]{\protect\rule{.1in}{.1in}}
\providecommand{\U}[1]{\protect\rule{.1in}{.1in}}
\providecommand{\U}[1]{\protect\rule{.1in}{.1in}}
\providecommand{\U}[1]{\protect\rule{.1in}{.1in}}
\providecommand{\U}[1]{\protect\rule{.1in}{.1in}}
\providecommand{\U}[1]{\protect\rule{.1in}{.1in}}
\providecommand{\U}[1]{\protect\rule{.1in}{.1in}}
\providecommand{\U}[1]{\protect\rule{.1in}{.1in}}
\providecommand{\U}[1]{\protect\rule{.1in}{.1in}}
\providecommand{\U}[1]{\protect\rule{.1in}{.1in}}
\providecommand{\U}[1]{\protect\rule{.1in}{.1in}}
\providecommand{\U}[1]{\protect\rule{.1in}{.1in}}
\providecommand{\U}[1]{\protect\rule{.1in}{.1in}}
\providecommand{\U}[1]{\protect\rule{.1in}{.1in}}
\providecommand{\U}[1]{\protect\rule{.1in}{.1in}}
\providecommand{\U}[1]{\protect\rule{.1in}{.1in}}
\providecommand{\U}[1]{\protect\rule{.1in}{.1in}}
\providecommand{\U}[1]{\protect\rule{.1in}{.1in}}
\providecommand{\U}[1]{\protect\rule{.1in}{.1in}}
\providecommand{\U}[1]{\protect\rule{.1in}{.1in}}
\newtheorem{theorem}{Theorem}
{}
\newtheorem{remark}{Remark}
\newenvironment{proof}[1][Proof]{\textbf{#1.} }{\ \rule{0.5em}{0.5em}}
\begin{document}

\title{On the Basis Property of the Root Functions of Some Class of Non-self-adjoint
Sturm--Liouville Operators.}
\author{Cemile Nur\\{\small Depart. of Math., Dogus University, Ac\i badem, Kadik\"{o}y, \ }\\{\small Istanbul, Turkey.}\ {\small e-mail: cnur@dogus.edu.tr}
\and O. A. Veliev\\{\small Depart. of Math., Dogus University, Ac\i badem, Kadik\"{o}y, \ }\\{\small Istanbul, Turkey.}\ {\small e-mail: oveliev@dogus.edu.tr}}
\date{}
\maketitle

\begin{abstract}
We obtain the asymptotic formulas for the eigenvalues and eigenfunctions of
the Sturm-Liouville operators with some regular boundary conditions. Using
these formulas, we find sufficient conditions on the potential $q$ such that
the root functions of these operators do not form a Riesz basis.

Key Words: Asymptotic formulas, Regular boundary conditions. Riesz basis.

AMS Mathematics Subject Classification: 34L05, 34L20.

\end{abstract}

\section{Introduction and Preliminary Facts}

Let $T_{1},T_{2},T_{3}$ and $T_{4}$ be the operators generated in $L_{2}[0,1]$
by the differential expression
\begin{equation}
l\left(  y\right)  =-y^{\prime\prime}+q(x)y
\end{equation}
and the following boundary conditions:
\begin{equation}
y_{0}^{\prime}+\beta y_{1}^{\prime}=0,\text{ }y_{0}-y_{1}=0,
\end{equation}%
\begin{equation}
y_{0}^{\prime}+\beta y_{1}^{\prime}=0,\text{ }y_{0}+y_{1}=0,
\end{equation}%
\begin{equation}
y_{0}^{\prime}-y_{1}^{\prime}=0,\text{ }y_{0}+\alpha y_{1}=0,
\end{equation}
and
\begin{equation}
y_{0}^{\prime}+y_{1}^{\prime}=0,\text{ }y_{0}+\alpha y_{1}=0
\end{equation}
respectively, where $q(x)$ is a complex-valued summable function on $[0,1]$,
$\beta\neq\pm1$ and $\alpha\neq\pm1.$

In conditions (2), (3), (4) and (5) if $\beta=1,$ $\beta=-1,$ $\alpha=1$ and
$\alpha=-1$ respectively, then any $\lambda\in%
\mathbb{C}
$ is an eigenvalue of infinite multiplicity. In (2) and (4) if $\beta=-1$ and
$\alpha=-1$ then they are periodic boundary conditions; In (3) and (5) if
$\beta=1$ and $\alpha=1$ then they are antiperiodic boundary conditions.

These boundary conditions are regular but not strongly regular. Note that, if
the boundary conditions are strongly regular, then the root functions form a
Riesz basis (this result was proved independently in [6], [10] and [17]). In
the case when an operator is regular but not strongly regular, the root
functions generally do not form even usual basis. However, Shkalikov [20],
[21] proved that they can be combined in pairs, so that the corresponding
2-dimensional subspaces form a Riesz basis of subspaces.

In the regular but not strongly regular boundary conditions, periodic and
antiperiodic boundary conditions are the ones more commonly studied.
Therefore, let us briefly describe some historical developments related to the
Riesz basis property of the root functions of the periodic and antiperiodic
boundary value problems. First results were obtained by Kerimov and Mamedov
[8]. They established that, if
\[
q\in C^{4}[0,1],\ q(1)\neq q(0),
\]
then the root functions of the operator $L_{0}(q)$ form a Riesz basis in
$L_{2}[0,1],$ where $L_{0}(q)$ denotes the operator generated by (1) and the
periodic boundary conditions.

The first result in terms of the Fourier coefficients of the potential $q$ was
obtained by Dernek and Veliev [1]. They proved that if the conditions
\begin{align}
\lim_{n\rightarrow\infty}\frac{\ln\left\vert n\right\vert }{nq_{2n}}  &
=0,\text{ }\\
q_{2n}  &  \sim q_{-2n}%
\end{align}
hold, then the root functions of $L_{0}(q)$ form a Riesz basis in $L_{2}%
[0,1]$, where $q_{n}=:(q,e^{i2\pi nx})$ is the Fourier coefficient of $q$ and
everywhere, without loss of generality, it is assumed that $q_{0}=0.$ Here
$(.,.)$ denotes the inner product in $L_{2}[0,1]$ and $a_{n}\sim b_{n}$ means
that $a_{n}=O(b_{n})$ and $b_{n}=O(a_{n})$ as $\ n\rightarrow\infty.$ Makin
[11] improved this result. Using another method he proved that the assertion
on the Riesz basis property remains valid if condition (7) holds, but
condition (6) is replaced by a less restrictive one: $q\in W_{1}^{s}[0,1],$
\[
q^{(k)}(0)=q^{(k)}(1),\quad\forall\,k=0,1,...,s-1
\]
holds and $\mid q_{2n}\mid>cn^{-s-1}$ with some$\ \,c>0$ for sufficiently
large $n,$ where $s$ is a nonnegative integer. Besides, some conditions which
imply the absence of the Riesz basis property were presented in [11].
Shkalilov and Veliev obtained in [22] more general results which cover all
results discussed above.

The other interesting results about periodic and antiperiodic boundary
conditions were obtained in [2-5, 7, 14-16, 24, 25].

The basis properties of regular but not strongly regular other some problems
are studied in [9,12,13]. It was proved in [12] that the system of the root
functions of the operator generated by (1) and the boundary conditions
\begin{align*}
y^{\prime}\left(  1\right)  -\left(  -1\right)  ^{\sigma}y^{\prime}\left(
0\right)  +\gamma y\left(  0\right)   &  =0\\
y\left(  1\right)  -\left(  -1\right)  ^{\sigma}y\left(  0\right)   &  =0,
\end{align*}
forms an unconditional basis of the space $L_{2}[0,1]$, where $q\left(
x\right)  $ is an arbitrary complex-valued function from the class
$L_{1}[0,1]$, $\gamma$ is an arbitrary nonzero complex constant and
$\sigma=0,1$. Kerimov and Kaya proved [9] that the system of the root
functions of the spectral problem
\begin{align*}
y^{\left(  4\right)  }+p_{2}\left(  x\right)  y^{\prime\prime}+p_{1}\left(
x\right)  y^{\prime}+p_{0}\left(  x\right)  y  &  =\lambda y,\text{ }0<x<1,\\
y^{\left(  s\right)  }\left(  1\right)  -\left(  -1\right)  ^{\sigma
}y^{\left(  s\right)  }\left(  0\right)  +\sum_{l=0}^{s-1}\alpha
_{s,l}y^{\left(  l\right)  }\left(  0\right)   &  =0,\text{ }s=1,2,3,\\
y\left(  1\right)  -\left(  -1\right)  ^{\sigma}y\left(  0\right)   &  =0,
\end{align*}
forms a basis in the space $L_{p}\left(  0,1\right)  $, $1<p<\infty$, when
$\alpha_{3,2}+\alpha_{1,0}\neq\alpha_{2,1}$, $p_{j}\left(  x\right)  \in
W_{1}^{j}\left(  0,1\right)  $, $j=1,2$, and $p_{0}\left(  x\right)  \in
L_{1}\left(  0,1\right)  $; moreover, this basis is unconditional for $p=2$,
where $\lambda$ is a spectral parameter; $p_{j}\left(  x\right)  \in
L_{1}\left(  0,1\right)  $, $j=1,2,3$, are complex-valued functions;
$\alpha_{s,l}$, $s=1,2,3$, $l=\overline{0,s-1}$ are arbitrary complex
constants; and $\sigma=0,1$.

It was shown in [19] that if
\[
q\left(  x\right)  =q\left(  1-x\right)  ,\text{ }\forall x\in\left[
0,1\right]  ,
\]
then the spectrum of each of the problems $T_{1}$, and $T_{3}$, coincides with
the spectrum of the periodic problem and the spectrum of each of the problems
$T_{2},$ and $T_{4}$, coincides with the spectrum of the antiperiodic problem.

In this paper we prove that if
\begin{equation}
\lim_{n\rightarrow\infty}\dfrac{\ln\left\vert n\right\vert }{ns_{2n}}=0,
\end{equation}
where $s_{k}=\left(  q,\sin2\pi kx\right)  ,$ then the large eigenvalues of
the operators $T_{1}$ and $T_{3}$ are simple. Moreover, if there exists a
sequence $\left\{  n_{k}\right\}  $ such that (8) holds when $n$ is replaced
by $n_{k},$ then the root functions of these operators do not form a Riesz basis.

Similarly, if
\begin{equation}
\lim_{n\rightarrow\infty}\dfrac{\ln\left\vert n\right\vert }{ns_{2n+1}}=0,
\end{equation}
then the large eigenvalues of the operators $T_{2}$ and $T_{4}$ are simple and
if there exists a sequence $\left\{  n_{k}\right\}  $ such that (9) holds when
$n$ is replaced by $n_{k},$ then the root functions of these operators do not
form a Riesz basis.

Moreover we obtain asymptotic formulas of arbitrary order for the eigenvalues
and eigenfunctions of the operators $T_{1}$,$T_{2},T_{3}$ and $T_{4}$.

\section{Main Results}

We will focus only on the operator $T_{1}$. The investigations of the
operators $T_{2},T_{3}$ and $T_{4}$ are similar. It is well-known that ( see
formulas (47a), (47b)) in page 65 of [18] ) the eigenvalues of the operators
$T_{1}(q)$ consist of the sequences $\{\lambda_{n,1}\},\{\lambda_{n,2}\}$
satisfying
\begin{equation}
\lambda_{n,j}=(2n\pi)^{2}+O(n^{1/2})
\end{equation}
for $j=1,2$. From this formula one can easily obtain the following inequality
\begin{equation}
\left\vert \lambda_{n,j}-(2\pi k)^{2}\right\vert =\left\vert 2(n-k)\pi
\right\vert \left\vert 2(n+k)\pi\right\vert +O(n^{\frac{1}{2}})>n
\end{equation}
for $j=1,2;$ $k\neq n;$ $k=0,1,...;$ and $n\geq N,$ where we denote by $N$ a
sufficiently large positive integer, that is, $N\gg1.$

It is easy to verify that if $q(x)=0$ then the eigenvalues of the operator
$T_{1},$ denoted by $T_{1}(0),$ are $\lambda_{n}=\left(  2\pi n\right)  ^{2}$
for $n=0,1,\ldots$ The eigenvalue $0$ is simple and the corresponding
eigenfunction is $1.$ The eigenvalues $\lambda_{n}=\left(  2\pi n\right)
^{2}$ for $n=1,2,\ldots$ are double and the corresponding eigenfunctions and
associated functions are
\begin{equation}
y_{n}\left(  x\right)  =\cos2\pi nx\text{ }\And\text{ }\phi_{n}\left(
x\right)  =\left(  \frac{\beta}{1+\beta}-x\right)  \frac{\sin2\pi nx}{4\pi n}%
\end{equation}
respectively. Note that for any constant $c$, $\phi_{n}\left(  x\right)
+cy_{n}\left(  x\right)  $ is also an associated function. It can be shown
that the adjoint operator $T_{1}^{\ast}(0)$ is associated with the boundary
conditions:
\begin{equation}
y_{1}+\overline{\beta}y_{0}=0,\text{ }y_{1}^{\prime}-y_{0}^{\prime}=0.
\end{equation}
It is easy to see that, $0$ is a simple eigenvalue of $T_{1}^{\ast}(0)$ and
the corresponding eigenfunction is $y_{0}^{\ast}\left(  x\right)  =x-\dfrac
{1}{1+\overline{\beta}}$ . The other eigenvalues $\lambda_{n}^{\ast}=\left(
2\pi n\right)  ^{2}$ for $n=1,2,\ldots$, are double and the corresponding
eigenfunctions and associated functions are
\begin{equation}
y_{n}^{\ast}\left(  x\right)  =\sin2\pi nx\text{ }\And\text{ }\phi_{n}^{\ast
}\left(  x\right)  =\left(  x-\dfrac{1}{1+\overline{\beta}}\right)  \frac
{\cos2\pi nx}{4\pi n}\nonumber
\end{equation}
respectively.

Let
\begin{equation}
\varphi_{n}\left(  x\right)  :=\frac{16\pi n\left(  \beta+1\right)  }{\beta
-1}\phi_{n}\left(  x\right)  =\frac{4\left(  \beta+1\right)  }{\beta-1}\left(
\dfrac{\beta}{1+\beta}-x\right)  \sin2\pi nx
\end{equation}
and
\begin{equation}
\varphi_{n}^{\ast}\left(  x\right)  :=\frac{16\pi n\left(  \overline{\beta
}+1\right)  }{\overline{\beta}-1}\phi_{n}^{\ast}\left(  x\right)
=\frac{4\left(  \overline{\beta}+1\right)  }{\overline{\beta}-1}\left(
x-\dfrac{1}{1+\overline{\beta}}\right)  \cos2\pi nx.
\end{equation}
The system of the root functions of $T_{1}^{\ast}(0)$ can be written as
$\{f_{n}:n\in\mathbb{Z}\},$ where
\begin{equation}
f_{-n}=\sin2\pi nx,\text{ }\forall n>0\And\text{ }f_{n}=\varphi_{n}^{\ast
}\left(  x\right)  ,\text{ }\forall n\geq0.
\end{equation}
One can easily verify that it forms a basis in $L_{2}[0,1]$ and the
biorthogonal system $\{g_{n}:n\in\mathbb{Z}\}$ is the system of the root
functions of $T_{1}(0),$ where
\begin{equation}
g_{-n}=\varphi_{n}\left(  x\right)  ,\forall n>0\text{ }\And\text{ }g_{n}%
=\cos2\pi nx,\forall n\geq0,
\end{equation}
since $\left(  f_{n},g_{m}\right)  =\delta_{n,m}.$

To obtain the asymptotic formulas for the eigenvalues $\lambda_{n,j}$ and the
corresponding normalized eigenfunctions $\Psi_{n,j}(x)$ of $T_{1}(q)$ we use
(11) and the well-known relations
\begin{equation}
(\lambda_{N,j}-(2\pi n)^{2})(\Psi_{N,j},\sin2\pi nx)=(q\Psi_{N,j},\sin2\pi nx)
\end{equation}
and%
\begin{equation}
\left(  \lambda_{N,j}-\left(  2\pi n\right)  ^{2}\right)  \left(  \Psi
_{N,j},\varphi_{n}^{\ast}\right)  -\gamma_{1}n\left(  \Psi_{N,j},\sin2\pi
nx\right)  =\left(  q\Psi_{N,j},\varphi_{n}^{\ast}\right)  ,
\end{equation}
where%
\[
\gamma_{1}=\frac{16\pi\left(  \beta+1\right)  }{\beta-1},
\]
which can be obtained by multiplying both sides of the equality
\[
-\left(  \Psi_{N,j}\right)  ^{\prime\prime}+q\left(  x\right)  \Psi
_{N,j}=\lambda_{N,j}\Psi_{N,j}%
\]
by $\sin2\pi nx$ and $\varphi_{n}^{\ast}$ respectively. It follows from (18)
and (19) that
\begin{equation}
\left(  \Psi_{N,j},\sin2\pi nx\right)  =\frac{\left(  q\left(  x\right)
\Psi_{N,j},\sin2\pi nx\right)  }{\lambda_{N,j}-\left(  2\pi n\right)  ^{2}%
};\text{ }N\neq n,
\end{equation}

\begin{equation}
\left(  \Psi_{N,j},\varphi_{n}^{\ast}\right)  =\frac{\gamma_{1}n\left(
q\left(  x\right)  \Psi_{N,j},\sin2\pi nx\right)  }{\left(  \lambda
_{N,j}-\left(  2\pi n\right)  ^{2}\right)  ^{2}}+\frac{\left(  q\left(
x\right)  \Psi_{N,j},\varphi_{n}^{\ast}\right)  }{\lambda_{N,j}-\left(  2\pi
n\right)  ^{2}};\text{ }N\neq n.
\end{equation}
Moreover, we use the following relations
\begin{align}
\left(  \Psi_{N,j},\overline{q}\sin2\pi nx\right)   &  =\sum_{n_{1}=0}%
^{\infty}[\left(  q\varphi_{n_{1}},\sin2\pi nx\right)  \left(  \Psi_{N,j}%
,\sin2\pi n_{1}x\right)  +\\
&  +\left(  q\cos2\pi n_{1}x,\sin2\pi nx\right)  \left(  \Psi_{N,j}%
,\varphi_{n_{1}}^{\ast}\right)  ],\nonumber
\end{align}%
\begin{equation}
\left(  \Psi_{N,j},\overline{q}\varphi_{n}^{\ast}\right)  =\sum_{n_{1}%
=0}^{\infty}\left[  \left(  q\varphi_{n_{1}},\varphi_{n}^{\ast}\right)
\left(  \Psi_{N,j},\sin2\pi n_{1}x\right)  +\left(  q\cos2\pi n_{1}%
x,\varphi_{n}^{\ast}\right)  \left(  \Psi_{N,j},\varphi_{n_{1}}^{\ast}\right)
\right]  ,
\end{equation}%
\begin{align}
\left\vert (q\Psi_{N,j},\sin2\pi nx)\right\vert  &  <4M,\\
\left\vert (q\Psi_{N,j},\varphi_{n}^{\ast})\right\vert  &  <4M,
\end{align}
for $N\gg1,$where $M=\sup\left\vert q_{n}\right\vert .$ These relations are
obvious for $q\in L_{2}(0,1),$ since to obtain (22) and (23) we can use the
decomposition of $\overline{q}\sin2\pi nx$ and $\overline{q}\varphi_{n}^{\ast
}$ by basis (16). For $q\in L_{1}(0,1)$\ see Lemma 1 of [23].

To obtain the asymptotic formulas for the eigenvalues and eigenfunctions we
iterate (18) and (19) by using (22), (23). First let us prove the following
obvious asymptotic formulas for the eigenfunctions $\Psi_{n,j}$. The expansion
of $\Psi_{n,j}$\ by basis (17) can be written in the form
\begin{equation}
\Psi_{n,j}=u_{n,j}\varphi_{n}\left(  x\right)  +v_{n,j}\cos2\pi nx+h_{n,j}%
\left(  x\right)  ,
\end{equation}
where
\begin{equation}
u_{n,j}=\left(  \Psi_{n,j},\sin2\pi nx\right)  ,\text{ }v_{n,j}=\left(
\Psi_{n,j},\varphi_{n}^{\ast}\right)  ,
\end{equation}%
\[
h_{n,j}\left(  x\right)  =\sum_{\substack{k=0\\k\neq n}}^{\infty}\left[
\left(  \Psi_{n,j},\sin2\pi kx\right)  \varphi_{k}\left(  x\right)  +\left(
\Psi_{n,j},\varphi_{k}^{\ast}\right)  \cos2\pi kx\right]  .
\]
Using (20), (21), (24) and (25) one can readily see that, there exists a
constant $C$ such that
\begin{equation}
\sup\left\vert h_{n,j}\left(  x\right)  \right\vert \leq C\left(  \sum_{k\neq
n}\left(  \frac{1}{\mid\lambda_{n,j}-\left(  2\pi k\right)  ^{2}\mid}+\frac
{n}{\left\vert \left(  \lambda_{n,j}-\left(  2\pi k\right)  ^{2}\right)
^{2}\right\vert }\right)  \right)  =O\left(  \frac{\ln n}{n}\right)
\end{equation}
and by (26) we get
\begin{equation}
\Psi_{n,j}=u_{n,j}\varphi_{n}\left(  x\right)  +v_{n,j}\cos2\pi nx+O\left(
\frac{\ln n}{n}\right)  .
\end{equation}

Since $\Psi_{n,j}$\ is normalized, we have
\[
1=\left\Vert \Psi_{n,j}\right\Vert ^{2}=\left(  \Psi_{n,j},\Psi_{n,j}\right)
=\left\vert u_{n,j}\right\vert ^{2}\left\Vert \varphi_{n}\left(  x\right)
\right\Vert ^{2}+\left\vert v_{n,j}\right\vert ^{2}\left\Vert \cos2\pi
nx\right\Vert ^{2}+
\]%
\[
+u_{n,j}\overline{v_{n,j}}\left(  \varphi_{n}\left(  x\right)  ,\cos2\pi
nx\right)  +v_{n,j}\overline{u_{n,j}}\left(  \cos2\pi nx,\varphi_{n}\left(
x\right)  \right)  +O\left(  \frac{\ln n}{n}\right)
\]%
\[
=\left(  \frac{8}{3}\dfrac{\left\vert \beta\right\vert ^{2}-\operatorname{Re}%
\beta+1}{\left\vert \beta-1\right\vert ^{2}}\right)  \left\vert u_{n,j}%
\right\vert ^{2}+\frac{1}{2}\left\vert v_{n,j}\right\vert ^{2}+O\left(
\frac{\ln n}{n}\right)  ,
\]
that is,%
\begin{equation}
a\left\vert u_{n,j}\right\vert ^{2}+\frac{1}{2}\left\vert v_{n,j}\right\vert
^{2}=1+O\left(  \frac{\ln n}{n}\right)  ,
\end{equation}
where
\[
a=\frac{8}{3}\dfrac{\left\vert \beta\right\vert ^{2}-\operatorname{Re}\beta
+1}{\left\vert \beta-1\right\vert ^{2}}.
\]
Note that $a\neq0$, since $\left\vert \beta\right\vert ^{2}+1>\left\vert
\beta\right\vert .$

Now let us iterate (18). Using (22) in (18) we get
\begin{gather*}
\left(  \lambda_{n,j}-\left(  2\pi n\right)  ^{2}\right)  \left(  \Psi
_{n,j},\sin2\pi nx\right)  =\\
=\sum_{n_{1}=0}^{\infty}\left[  \left(  q\varphi_{n_{1}},\sin2\pi nx\right)
\left(  \Psi_{n,j},\sin2\pi n_{1}x\right)  +\left(  q\cos2\pi n_{1}x,\sin2\pi
nx\right)  \left(  \Psi_{n,j},\varphi_{n_{1}}^{\ast}\left(  x\right)  \right)
\right]  .
\end{gather*}
Isolating the terms in the right-hand side of this equality containing the
multiplicands $\left(  \Psi_{n,j},\sin2\pi nx\right)  $ and $\left(
\Psi_{n,j},\varphi_{n}^{\ast}\left(  x\right)  \right)  $ (i.e., case
$n_{1}=n$ ), using\ (20) and (21) for the terms $\left(  \Psi_{n,j},\sin2\pi
n_{1}x\right)  $ and \ $\left(  \Psi_{n,j},\varphi_{n_{1}}^{\ast}\left(
x\right)  \right)  $ respectively (in the case $n_{1}\neq n$) we obtain%

\begin{gather*}
\left[  \lambda_{n,j}-\left(  2\pi n\right)  ^{2}-\left(  q\varphi_{n}%
,\sin2\pi nx\right)  \right]  \left(  \Psi_{n,j},\sin2\pi nx\right)  -\left(
q\cos2\pi nx,\sin2\pi nx\right)  \left(  \Psi_{n,j},\varphi_{n}^{\ast}\right)
=\\
=\sum_{\substack{n_{1}=0\\n_{1}\neq n}}^{\infty}\left[  \left(  q\varphi
_{n_{1}},\sin2\pi nx\right)  \left(  \Psi_{n,j},\sin2\pi n_{1}x\right)
+\left(  q\cos2\pi n_{1}x,\sin2\pi nx\right)  \left(  \Psi_{n,j}%
,\varphi_{n_{1}}^{\ast}\left(  x\right)  \right)  \right] \\
=\sum_{n_{1}}\left[  a_{1}\left(  \lambda_{n,j}\right)  \left(  q\left(
x\right)  \Psi_{n,j},\sin2\pi n_{1}x\right)  +b_{1}\left(  \lambda
_{n,j}\right)  \left(  q\left(  x\right)  \Psi_{n,j},\varphi_{n_{1}}^{\ast
}\right)  \right]  .
\end{gather*}
where
\begin{align*}
a_{1}\left(  \lambda_{n,j}\right)   &  =\frac{\left(  q\varphi_{n_{1}}%
,\sin2\pi nx\right)  }{\lambda_{n,j}-\left(  2\pi n_{1}\right)  ^{2}}%
+\frac{\gamma_{1}n_{1}\left(  q\cos2\pi n_{1}x,\sin2\pi nx\right)  }{\left(
\lambda_{n,j}-\left(  2\pi n_{1}\right)  ^{2}\right)  ^{2}},\\
b_{1}\left(  \lambda_{n,j}\right)   &  =\frac{\left(  q\cos2\pi n_{1}%
x,\sin2\pi nx\right)  }{\lambda_{n,j}-\left(  2\pi n_{1}\right)  ^{2}}.
\end{align*}

Using (22) and (23) for the terms $\left(  q\Psi_{n,j},\sin2\pi n_{1}x\right)
$ and $\left(  q\Psi_{n,j},\varphi_{n_{1}}^{\ast}\right)  $ of the last
summation we obtain%

\begin{gather*}
\left[  \lambda_{n,j}-\left(  2\pi n\right)  ^{2}-\left(  q\varphi_{n}%
,\sin2\pi nx\right)  \right]  \left(  \Psi_{n,j},\sin2\pi nx\right)  -\left(
q\cos2\pi nx,\sin2\pi nx\right)  \left(  \Psi_{n,j},\varphi_{n}^{\ast}\right)
=\\
=\sum_{n_{1}}\left[  a_{1}\left(  \lambda_{n,j}\right)  \left(  q\Psi
_{n,j},\sin2\pi n_{1}x\right)  +b_{1}\left(  \lambda_{n,j}\right)  \left(
q\Psi_{n,j},\varphi_{n_{1}}^{\ast}\right)  \right]  =\\
=\sum_{n_{1}}a_{1}\left(  \sum_{n_{2}=0}^{\infty}\left[  \left(
q\varphi_{n_{2}},\sin2\pi n_{1}x\right)  \left(  \Psi_{n,j},\sin2\pi
n_{2}x\right)  +\left(  q\cos2\pi n_{2}x,\sin2\pi n_{1}x\right)  \left(
\Psi_{n,j},\varphi_{n_{2}}^{\ast}\left(  x\right)  \right)  \right]  \right)
+\\
+\sum_{n_{1}}b_{1}\left(  \sum_{n_{2}=0}^{\infty}\left[  \left(
q\varphi_{n_{2}},\varphi_{n_{1}}^{\ast}\right)  \left(  \Psi_{n,j},\sin2\pi
n_{2}x\right)  +\left(  q\cos2\pi n_{2}x,\varphi_{n_{1}}^{\ast}\right)
\left(  \Psi_{n,j},\varphi_{n_{2}}^{\ast}\left(  x\right)  \right)  \right]
\right)  .
\end{gather*}
Now isolating the terms for $n_{2}=n$ we get%
\begin{gather*}
\left[  \lambda_{n,j}-\left(  2\pi n\right)  ^{2}-\left(  q\varphi_{n}%
,\sin2\pi nx\right)  \right]  \left(  \Psi_{n,j},\sin2\pi nx\right)  -\left(
q\cos2\pi nx,\sin2\pi nx\right)  \left(  \Psi_{n,j},\varphi_{n}^{\ast}\right)
=\\
=\sum_{n_{1}}\left[  a_{1}\left(  q\varphi_{n},\sin2\pi n_{1}x\right)
+b_{1}\left(  q\varphi_{n},\varphi_{n_{1}}^{\ast}\right)  \right]  \left(
\Psi_{n,j},\sin2\pi nx\right)  +\\
+\sum_{n_{1}}\left[  a_{1}\left(  q\cos2\pi nx,\sin2\pi n_{1}x\right)
+b_{1}\left(  q\cos2\pi nx,\varphi_{n_{1}}^{\ast}\right)  \right]  \left(
\Psi_{n,j},\varphi_{n}^{\ast}\left(  x\right)  \right)  +\\
=\sum_{n_{1},n_{2}}\left(  \left[  a_{1}\left(  q\varphi_{n_{2}},\sin2\pi
n_{1}x\right)  +b_{1}\left(  q\varphi_{n_{2}},\varphi_{n_{1}}^{\ast}\right)
\right]  \left(  \Psi_{n,j},\sin2\pi n_{2}x\right)  +\right)  +\\
+\sum_{n_{1},n_{2}}\left[  a_{1}\left(  q\cos2\pi n_{2}x,\sin2\pi
n_{1}x\right)  +b_{1}\left(  q\cos2\pi n_{2}x,\varphi_{n_{1}}^{\ast}\right)
\right]  \left(  \Psi_{n,j},\varphi_{n_{2}}^{\ast}\right)  .
\end{gather*}
Here and further the summations are taken under the conditions $n_{i}\neq n$
and $n_{i}=0,1,...$ for $i=1,2,...$ Introduce the notations%
\begin{align*}
C_{1} &  =:a_{1},\text{ }M_{1}=:b_{1},\\
C_{2} &  =:a_{1}a_{2}+b_{1}A_{2}=C_{1}a_{2}+M_{1}A_{2},\text{ }M_{2}%
=:a_{1}b_{2}+b_{1}B_{2}=C_{1}b_{2}+M_{1}B_{2},\\
C_{k+1} &  =:C_{k}a_{k+1}+M_{k}A_{k+1},\text{ }M_{k+1}=:C_{k}b_{k+1}%
+M_{k}B_{k+1};\text{ }k=2,3,\ldots,
\end{align*}

where%
\begin{gather*}
a_{k+1}=a_{k+1}\left(  \lambda_{n,j}\right)  =\dfrac{\left(  q\varphi
_{n_{k+1}},\sin2\pi n_{k}x\right)  }{\lambda_{n,j}-\left(  2\pi n_{k+1}%
\right)  ^{2}}+\dfrac{\gamma_{1}n_{k+1}\left(  q\cos2\pi n_{k+1}x,\sin2\pi
n_{k}x\right)  }{\left(  \lambda_{n,j}-\left(  2\pi n_{k+1}\right)
^{2}\right)  ^{2}},\\
b_{k+1}=b_{k+1}\left(  \lambda_{n,j}\right)  =\dfrac{\left(  q\cos2\pi
n_{k+1}x,\sin2\pi n_{k}x\right)  }{\lambda_{n,j}-\left(  2\pi n_{k+1}\right)
^{2}},\\
A_{k+1}=A_{k+1}\left(  \lambda_{n,j}\right)  =\dfrac{\left(  q\varphi
_{n_{k+1}},\varphi_{n_{k}}^{\ast}\right)  }{\lambda_{n,j}-\left(  2\pi
n_{k+1}\right)  ^{2}}+\dfrac{\gamma_{1}n_{k+1}\left(  q\cos2\pi n_{k+1}%
x,\varphi_{n_{k}}^{\ast}\right)  }{\left(  \lambda_{n,j}-\left(  2\pi
n_{k+1}\right)  ^{2}\right)  ^{2}},\\
B_{k+1}=B_{k+1}\left(  \lambda_{n,j}\right)  =\dfrac{\left(  q\cos2\pi
n_{k+1}x,\varphi_{n_{k}}^{\ast}\right)  }{\lambda_{n,j}-\left(  2\pi
n_{k+1}\right)  ^{2}}.
\end{gather*}
Using these notations and repeating this iteration $k$ times we get
\begin{gather}
\left[  \lambda_{n,j}-\left(  2\pi n\right)  ^{2}-\left(  q\varphi_{n}%
,\sin2\pi nx\right)  -\widetilde{A}_{k}\left(  \lambda_{n,j}\right)  \right]
\left(  \Psi_{n,j},\sin2\pi nx\right)  =\nonumber\\
=\left[  \left(  q\cos2\pi nx,\sin2\pi nx\right)  +\widetilde{B}_{k}\left(
\lambda_{n,j}\right)  \right]  \left(  \Psi_{n,j},\varphi_{n}^{\ast}\left(
x\right)  \right)  +R_{k},
\end{gather}
where
\begin{align*}
\widetilde{A}_{k}\left(  \lambda_{n,j}\right)   &  =\sum_{m=1}^{k}\alpha
_{m}\left(  \lambda_{n,j}\right)  \text{, }\widetilde{B}_{k}\left(
\lambda_{n,j}\right)  =\sum_{m=1}^{k}\beta_{m}\left(  \lambda_{n,j}\right)
,\\
\alpha_{k}\left(  \lambda_{n,j}\right)   &  =\sum_{n_{1},\ldots,n_{k}}\left[
C_{k}\left(  q\varphi_{n},\sin2\pi n_{k}x\right)  +M_{k}\left(  q\varphi
_{n},\varphi_{n_{k}}^{\ast}\right)  \right]  ,\\
\beta_{k}\left(  \lambda_{n,j}\right)   &  =\sum_{n_{1},\ldots,n_{k}}\left[
C_{k}\left(  q\cos2\pi nx,\sin2\pi n_{k}x\right)  +M_{k}\left(  q\cos2\pi
nx,\varphi_{n_{k}}^{\ast}\right)  \right]  ,\\
R_{k}  &  =\sum_{n_{1},\ldots,n_{k+1}}\left\{  C_{k+1}\left(  q\Psi_{n,j}%
,\sin2\pi n_{k+1}x\right)  +M_{k+1}\left(  q\Psi_{n,j},\varphi_{n_{k+1}}%
^{\ast}\right)  \right\}  .
\end{align*}
It follows from (11), (24) and (25) that
\begin{equation}
\alpha_{k}\left(  \lambda_{n,j}\right)  =O\left(  \left(  \frac{\ln\left\vert
n\right\vert }{n}\right)  ^{k}\right)  ,\beta_{k}\left(  \lambda_{n,j}\right)
=O\left(  \left(  \frac{\ln\left\vert n\right\vert }{n}\right)  ^{k}\right)
,R_{k}=O\left(  \left(  \frac{\ln\left\vert n\right\vert }{n}\right)
^{k+1}\right)  .
\end{equation}

Therefore if we take limit in (31) for $k\rightarrow\infty$, we obtain
\[
\left[  \lambda_{n,j}-\left(  2\pi n\right)  ^{2}-Q_{n}-A\left(  \lambda
_{n,j}\right)  \right]  u_{n,j}=\left[  P_{n}+B\left(  \lambda_{n,j}\right)
\right]  v_{n,j},
\]
where
\begin{equation}
P_{n}=\left(  q\cos2\pi nx,\sin2\pi nx\right)  ,\text{ }Q_{n}=\left(
q\varphi_{n},\sin2\pi nx\right)  ,
\end{equation}%
\begin{equation}
A\left(  \lambda_{n,j}\right)  =\sum_{m=1}^{\infty}\alpha_{m}\left(
\lambda_{n,j}\right)  =O\left(  \frac{\ln\left\vert n\right\vert }{n}\right)
\text{, }B\left(  \lambda_{n,j}\right)  =\sum_{m=1}^{\infty}\beta_{m}\left(
\lambda_{n,j}\right)  =O\left(  \frac{\ln\left\vert n\right\vert }{n}\right)
.
\end{equation}

Thus iterating (18) we obtained (31). Now starting \ to iteration from (19)
instead of (18) and using (23), (22) and arguing as in the previous iteration,
we get%
\begin{equation}
\left[  \lambda_{n,j}-\left(  2\pi n\right)  ^{2}-P_{n}^{\ast}-A_{k}^{\prime
}\left(  \lambda_{n,j}\right)  \right]  v_{n,j}=\left[  \gamma_{1}%
n+Q_{n}^{\ast}+B_{k}^{\prime}\left(  \lambda_{n,j}\right)  \right]
u_{n,j}+R_{k}^{\prime},
\end{equation}
where
\begin{equation}
P_{n}^{\ast}=\left(  q\cos2\pi nx,\varphi_{n}^{\ast}\right)  ,\text{ }%
Q_{n}^{\ast}=\left(  q\varphi_{n},\varphi_{n}^{\ast}\right)  ,
\end{equation}%
\begin{align*}
A_{k}^{\prime}\left(  \lambda_{n,j}\right)   &  =\sum_{m=1}^{k}\alpha
_{m}^{\prime}\left(  \lambda_{n,j}\right)  \text{, }B_{k}^{\prime}\left(
\lambda_{n,j}\right)  =\sum_{m=1}^{k}\beta_{m}^{\prime}\left(  \lambda
_{n,j}\right)  ,\\
\alpha_{k}^{\prime}\left(  \lambda_{n,j}\right)   &  =\sum_{n_{1},\ldots
,n_{k}}\left[  \widetilde{C}_{k}\left(  q\cos2\pi nx,\sin2\pi n_{k}x\right)
+\widetilde{M}_{k}\left(  q\cos2\pi nx,\varphi_{n_{k}}^{\ast}\right)  \right]
,\\
\beta_{k}^{\prime}\left(  \lambda_{n,j}\right)   &  =\sum_{n_{1},\ldots,n_{k}%
}\left[  \widetilde{C}_{k}\left(  q\varphi_{n},\sin2\pi n_{k}x\right)
+\widetilde{M}_{k}\left(  q\varphi_{n},\varphi_{n_{k}}^{\ast}\right)  \right]
,\\
R_{k}^{\prime}  &  =\sum_{n_{1},\ldots,n_{k+1}}\left\{  \widetilde{C}%
_{k+1}\left(  q\Psi_{n,j},\sin2\pi n_{k+1}x\right)  +\widetilde{M}%
_{k+1}\left(  q\Psi_{n,j},\varphi_{n_{k+1}}^{\ast}\right)  \right\}  ,
\end{align*}

\[
\widetilde{C}_{k+1}=\widetilde{C}_{k}a_{k+1}+\widetilde{M}_{k}A_{k+1},\text{
}\widetilde{M}_{k+1}=\widetilde{C}_{k}b_{k+1}+\widetilde{M}_{k}B_{k+1};\text{
}k=0,1,2,\ldots,
\]%
\begin{align*}
\widetilde{C}_{1} &  =A_{1}\left(  \lambda_{n,j}\right)  =\frac{\left(
q\varphi_{n_{1}},\varphi_{n}^{\ast}\right)  }{\lambda_{n,j}-\left(  2\pi
n_{1}\right)  ^{2}}+\frac{\gamma_{1}n_{1}\left(  q\cos2\pi n_{1}x,\varphi
_{n}^{\ast}\right)  }{\left(  \lambda_{n,j}-\left(  2\pi n_{1}\right)
^{2}\right)  ^{2}},\\
\widetilde{M}_{1} &  =B_{1}\left(  \lambda_{n,j}\right)  =\frac{\left(
q\cos2\pi n_{1}x,\varphi_{n}^{\ast}\right)  }{\lambda_{n,j}-\left(  2\pi
n_{1}\right)  ^{2}}.
\end{align*}
Similar to (32) one can verify that%
\begin{equation}
\alpha_{k}^{\prime}\left(  \lambda_{n,j}\right)  =O\left(  \left(  \frac
{\ln\left\vert n\right\vert }{n}\right)  ^{k}\right)  ,\beta_{k}^{\prime
}\left(  \lambda_{n,j}\right)  =O\left(  \left(  \frac{\ln\left\vert
n\right\vert }{n}\right)  ^{k}\right)  ,R_{k}^{\prime}=O\left(  \left(
\frac{\ln\left\vert n\right\vert }{n}\right)  ^{k+1}\right)  .
\end{equation}
If we take limit in (35) for $k\rightarrow\infty$, we obtain
\[
\left[  \lambda_{n,j}-\left(  2\pi n\right)  ^{2}-P_{n}^{\ast}-A^{\prime
}\left(  \lambda_{n,j}\right)  \right]  v_{n,j}=\left[  \gamma_{1}%
n+Q_{n}^{\ast}+B^{\prime}\left(  \lambda_{n,j}\right)  \right]  u_{n,j},
\]
where
\begin{equation}
A^{\prime}\left(  \lambda_{n,j}\right)  =\sum_{m=1}^{\infty}\alpha_{m}%
^{\prime}\left(  \lambda_{n,j}\right)  =O\left(  \frac{\ln\left\vert
n\right\vert }{n}\right)  \text{, }B^{\prime}\left(  \lambda_{n,j}\right)
=\sum_{m=1}^{\infty}\beta_{m}^{\prime}\left(  \lambda_{n,j}\right)  =O\left(
\frac{\ln\left\vert n\right\vert }{n}\right)  .
\end{equation}
To get the main results of this paper we use the following system of
equations, obtained above, with respect to $u_{n,j}$ and $v_{n,j}$
\begin{gather}
\left[  \lambda_{n,j}-\left(  2\pi n\right)  ^{2}-Q_{n}-A\left(  \lambda
_{n,j}\right)  \right]  u_{n,j}=\left[  P_{n}+B\left(  \lambda_{n,j}\right)
\right]  v_{n,j},\\
\left[  \lambda_{n,j}-\left(  2\pi n\right)  ^{2}-P_{n}^{\ast}-A^{\prime
}\left(  \lambda_{n,j}\right)  \right]  v_{n,j}=\left[  \gamma_{1}%
n+Q_{n}^{\ast}+B^{\prime}\left(  \lambda_{n,j}\right)  \right]  u_{n,j},
\end{gather}
where
\begin{gather}
Q_{n}=\left(  q\varphi_{n},\sin2\pi nx\right)  =\nonumber\\
=-\frac{2\left(  \beta+1\right)  }{\beta-1}\int_{0}^{1}xq\left(  x\right)
dx+\frac{2\left(  \beta+1\right)  }{\beta-1}\left(  xq\left(  x\right)
,\cos4\pi nx\right)  -\frac{2\beta}{\beta-1}\left(  q\left(  x\right)
,\cos4\pi nx\right)  \\
=-\frac{2\left(  \beta+1\right)  }{\beta-1}\int_{0}^{1}xq\left(  x\right)
dx+o\left(  1\right)  ,
\end{gather}%
\begin{gather}
P_{n}^{\ast}=\left(  q\cos2\pi nx,\varphi_{n}^{\ast}\right)  =\nonumber\\
=\frac{2\left(  \beta+1\right)  }{\beta-1}\int_{0}^{1}xq\left(  x\right)
dx+\frac{2\left(  \beta+1\right)  }{\beta-1}\left(  xq\left(  x\right)
,\cos4\pi nx\right)  -\frac{2}{\beta-1}\left(  q\left(  x\right)  ,\cos4\pi
nx\right)  \\
=\frac{2\left(  \beta+1\right)  }{\beta-1}\int_{0}^{1}xq\left(  x\right)
dx+o\left(  1\right)  ,
\end{gather}%
\begin{equation}
P_{n}=\left(  q\cos2\pi nx,\sin2\pi nx\right)  =\frac{1}{2}\left(  q,\sin4\pi
nx\right)  =o\left(  1\right)  ,
\end{equation}%
\begin{equation}
Q_{n}^{\ast}=\left(  q\varphi_{n},\varphi_{n}^{\ast}\right)  =8\left(
\frac{\beta_{1}+1}{\beta_{1}-1}\right)  ^{2}\int_{0}^{1}q\left(  x\right)
\left(  \dfrac{\beta_{1}}{1+\beta_{1}}-x\right)  \left(  x-\dfrac{1}%
{1+\beta_{1}}\right)  \sin4\pi nxdx=o\left(  1\right)  .
\end{equation}
Note that (39), (40) with (34), (38) give us
\begin{gather}
\left[  \lambda_{n,j}-\left(  2\pi n\right)  ^{2}-Q_{n}+O\left(  \dfrac
{\ln\left\vert n\right\vert }{n}\right)  \right]  u_{n,j}=\left[
P_{n}+O\left(  \dfrac{\ln\left\vert n\right\vert }{n}\right)  \right]
v_{n,j},\\
\left[  \lambda_{n,j}-\left(  2\pi n\right)  ^{2}-P_{n}^{\ast}+O\left(
\dfrac{\ln\left\vert n\right\vert }{n}\right)  \right]  v_{n,j}=\left[
\gamma_{1}n+Q_{n}^{\ast}+O\left(  \dfrac{\ln\left\vert n\right\vert }%
{n}\right)  \right]  u_{n,j}.
\end{gather}

Introduce the notations
\begin{align}
c_{n}  &  =\left(  q,\cos2\pi nx\right)  \text{, }s_{n}=\left(  q,\sin2\pi
nx\right) \nonumber\\
c_{n,1}  &  =\left(  xq,\cos2\pi nx\right)  \text{, }s_{n,1}=\left(
xq,\sin2\pi nx\right) \\
c_{n,2}  &  =\left(  x^{2}q,\cos2\pi nx\right)  \text{, }s_{n,2}=\left(
x^{2}q,\sin2\pi nx\right)  .\nonumber
\end{align}
In these notations we have
\begin{equation}
Q_{n}=-\frac{2\left(  \beta+1\right)  }{\beta-1}\int_{0}^{1}xq\left(
x\right)  dx+\frac{2\left(  \beta+1\right)  }{\beta-1}c_{2n,1}-\frac{2\beta
}{\beta-1}c_{2n}%
\end{equation}%
\begin{equation}
P_{n}^{\ast}=\frac{2\left(  \beta+1\right)  }{\beta-1}\int_{0}^{1}xq\left(
x\right)  dx+\frac{2\left(  \beta+1\right)  }{\beta-1}c_{2n,1}-\frac{2}%
{\beta-1}c_{2n}%
\end{equation}%
\begin{equation}
P_{n}=\frac{1}{2}s_{2n}%
\end{equation}%
\begin{equation}
Q_{n}^{\ast}=-8\left(  \frac{\beta+1}{\beta-1}\right)  ^{2}s_{2n,2}+8\left(
\frac{\beta+1}{\beta-1}\right)  ^{2}s_{2n,1}-\frac{8\beta}{\left(
\beta-1\right)  ^{2}}s_{2n}.
\end{equation}

\begin{theorem}
For $j=1,2$ the following statements hold:

$(a)$ Any eigenfunction $\Psi_{n,j}$ of $T_{1}$ corresponding to the
eigenvalue $\lambda_{n,j}$ defined in (10) satisfies
\begin{equation}
\Psi_{n,j}=\sqrt{2}\cos2\pi nx+O\left(  n^{-1/2}\right)  .
\end{equation}
Moreover there exists $N$ such that for all $n>N$ the geometric multiplicity
of the eigenvalue $\lambda_{n,j}$ is $1$.

$\left(  b\right)  $ A complex number $\lambda\in U(n)=:\{\lambda:\left\vert
\lambda-\left(  2\pi n\right)  ^{2}\right\vert \leq n\}$ is an eigenvalue of
$T_{1}$ if and only if it is a root of the equation
\begin{gather}
\left[  \lambda-\left(  2\pi n\right)  ^{2}-Q_{n}-A\left(  \lambda\right)
\right]  \left[  \lambda-\left(  2\pi n\right)  ^{2}-P_{n}^{\ast}-A^{\prime
}\left(  \lambda\right)  \right]  -\nonumber\\
-\left[  P_{n}+B\left(  \lambda\right)  \right]  \left[  \gamma_{1}%
n+Q_{n}^{\ast}+B^{\prime}\left(  \lambda\right)  \right]  =0.
\end{gather}
Moreover $\lambda\in U(n)$ is a double eigenvalue of $T_{1}$ if and only if
\textit{it is a double root of} (55) .
\end{theorem}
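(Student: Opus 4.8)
The plan is to extract both parts from the two scalar identities (39)--(40) (equivalently (47)--(48)) for $(u_{n,j},v_{n,j})$, the normalization (30), the expansion (26)--(29) together with the bound (28), the localization (10)--(11), and the magnitudes already recorded: $Q_n=O(1)$, $P_n^{\ast}=O(1)$ from (41)--(44); $P_n=o(1)$, $Q_n^{\ast}=o(1)$ from (45)--(46); $A,A',B,B'=O(\ln|n|/n)$ from (34), (38). The one external input is the classical fact (formulas (47a), (47b) of [18]) that for $n\gg1$ the disc $U(n)$ contains exactly two eigenvalues of $T_1$, counted with algebraic multiplicity. For (a): since $\lambda_{n,j}-(2\pi n)^2=O(n^{1/2})$ by (10), the coefficient of $v_{n,j}$ on the left of (40) is $O(n^{1/2})$, while the coefficient of $u_{n,j}$ on its right is $\gamma_1 n+o(1)$, nonzero and of size $n$; hence $u_{n,j}=O(n^{-1/2})v_{n,j}$. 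Identity (30) forces $|v_{n,j}|$ to stay bounded, so $u_{n,j}=O(n^{-1/2})$ and then, again by (30), $v_{n,j}\neq0$ and $|v_{n,j}|=\sqrt2+O(\ln|n|/n)$. Replacing $\Psi_{n,j}$ by $(\overline{v_{n,j}}/|v_{n,j}|)\Psi_{n,j}$ we may take $v_{n,j}=\sqrt2+O(n^{-1/2})$; substituting this and $u_{n,j}=O(n^{-1/2})$ into (29) and using that $\varphi_n$ is uniformly bounded and $\ln|n|/n=o(n^{-1/2})$ gives (54).

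For the geometric multiplicity, write $M(\lambda)$ for the $2\times2$ matrix whose determinant is the left side of (55). Equations (39)--(40) were obtained from $(T_1-\lambda_{n,j})\Psi_{n,j}=0$ alone, and their coefficients depend only on $q$, $n$, $\lambda_{n,j}$, not on the particular eigenfunction; thus $\Psi\mapsto\big((\Psi,\sin2\pi nx),(\Psi,\varphi_n^{\ast})\big)$ maps the eigenspace of $\lambda_{n,j}$ into $\ker M(\lambda_{n,j})$. This map is injective: running the estimate behind (28) for an arbitrary eigenfunction $\Psi$ gives $\|h\|\leq C\|\Psi\|\ln|n|/n$ in (26), so if both coordinates of $\Psi$ vanished then $\|\Psi\|\leq C\|\Psi\|\ln|n|/n$ and hence $\Psi=0$ for $n>N$. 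Since the $(2,1)$-entry $-(\gamma_1 n+Q_n^{\ast}+B'(\lambda_{n,j}))$ of $M(\lambda_{n,j})$ is nonzero, $M(\lambda_{n,j})\neq0$, so $\dim\ker M(\lambda_{n,j})\leq1$; therefore the geometric multiplicity is $1$.

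For (b), one first checks that $A,A',B,B'$, hence the left side $f(\lambda)$ of (55), are holomorphic on a neighbourhood of $\overline{U(n)}$: for $\lambda\in U(n)$ and $k\neq n$ the computation behind (11) gives $|\lambda-(2\pi k)^2|>n$, which keeps the building blocks $a_i,b_i,A_i,B_i$ of the iteration small and makes the defining series converge with $\alpha_m,\beta_m,\alpha'_m,\beta'_m=O((\ln|n|/n)^m)$ uniformly on $U(n)$. The forward implication is then immediate: an eigenvalue $\lambda\in U(n)$ has an eigenfunction whose coordinates form a nonzero vector of $\ker M(\lambda)$, so $f(\lambda)=\det M(\lambda)=0$. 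For the converse, set $\mu=\lambda-(2\pi n)^2$; on the circle $|\mu|=n$ the magnitudes above give $f(\lambda)=\mu^2+O(n)$, so $|f(\lambda)-\mu^2|<|\mu^2|$ for $n\gg1$, and Rouch\'e's theorem shows $f$ has exactly two zeros in $U(n)$, counted with multiplicity. Let $E$ be the set of eigenvalues of $T_1$ in $U(n)$, $a(\lambda_0)$ the algebraic multiplicity, and $m(\lambda_0)$ the order of $\lambda_0$ as a zero of $f$. By [18], $\sum_{\lambda_0\in E}a(\lambda_0)=2$; by the forward implication $E$ lies in the zero set of $f$, which has total order $2$. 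Hence if one shows $a(\lambda_0)\leq m(\lambda_0)$ for $\lambda_0\in E$, then $2=\sum_{\lambda_0\in E}a(\lambda_0)\leq\sum_{\lambda_0\in E}m(\lambda_0)\leq\sum_{\text{all }\lambda_0}m(\lambda_0)=2$, forcing every zero of $f$ in $U(n)$ to be an eigenvalue with $m(\lambda_0)=a(\lambda_0)$, which gives both assertions of (b).

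If $a(\lambda_0)=1$ there is nothing to prove. If $a(\lambda_0)=2$, then by [18] $\lambda_0$ is the only eigenvalue in $U(n)$ and, its geometric multiplicity being $1$ by part (a), it carries a single Jordan chain $\{\Psi,\Phi\}$ of length two, $(T_1-\lambda_0)\Psi=0$, $(T_1-\lambda_0)\Phi=\Psi$. Running the iteration that produced (39)--(40) with $\Phi$ in place of $\Psi_{n,j}$ but starting now from $(T_1-\lambda_0)\Phi=\Psi$, one gets an inhomogeneous system $M(\lambda_0)(u_\Phi,v_\Phi)=-M'(\lambda_0)(u_\Psi,v_\Psi)$, the right side being the $\lambda$-derivative at $\lambda_0$ of the coefficients evaluated on the data of $\Psi$ (this reflects that the associated function is the generalized eigenvector of the pencil $M(\lambda)$). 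Since $\operatorname{rank}M(\lambda_0)=1$, solvability of this system is equivalent to the right side being orthogonal to the left null vector of $M(\lambda_0)$, which in turn is equivalent to $f'(\lambda_0)=0$ (a short computation with the derivative of the $2\times2$ determinant $\det M$); hence $\lambda_0$ is a zero of $f$ of order $\geq2$. I expect this last step — carrying the iteration through for the associated function and recognizing the inhomogeneity as $-M'(\lambda_0)(u_\Psi,v_\Psi)$, which requires differentiating the infinitely iterated relations in $\lambda$ and controlling the resulting series — to be the main obstacle; alternatively it follows from the standard fact that reducing the boundary-value problem near $\lambda_0$ to the pencil $M(\lambda)$ preserves partial multiplicities.
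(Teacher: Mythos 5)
Your proposal follows the same overall route as the paper: derive (54) from the pair (47)--(48) together with the normalization (30) and the expansion (29); reduce part (b) to the vanishing of the determinant of the $2\times 2$ matrix $M(\lambda)$ behind (39)--(40); and combine Rouch\'e's theorem for $F(\lambda)$ against $(\lambda-(2\pi n)^{2})^{2}$ on the boundary of $U(n)$ with the fact that $T_{1}$ has exactly two eigenvalues (with multiplicity) in $U(n)$. Two of your local arguments differ from the paper's and are, if anything, cleaner. For the geometric multiplicity the paper argues that a two-dimensional eigenspace would contain an eigenfunction asymptotic to $\sqrt{2}\sin 2\pi nx$, contradicting (54); you instead inject the eigenspace into $\ker M(\lambda_{n,j})$ via $\Psi\mapsto\bigl((\Psi,\sin 2\pi nx),(\Psi,\varphi_{n}^{\ast})\bigr)$ and note that this kernel is at most one-dimensional because the entry $\gamma_{1}n+o(1)$ is nonzero. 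The same observation also absorbs the paper's case split ($u_{n,j}\neq 0$ versus $u_{n,j}=0$) in the forward implication of (b), since any eigenvalue produces a nonzero kernel vector and hence a zero of the determinant.

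Concerning the step you flag as the main obstacle --- that a double eigenvalue $\lambda_{0}$ is a double root of (55), i.e.\ $a(\lambda_{0})\le m(\lambda_{0})$ --- be aware that the paper does not supply it either: it asserts that a double eigenvalue ``has no other eigenvalues in $U(n)$ and hence (55) has no other roots,'' which presupposes exactly the multiplicity matching at issue (with one double eigenvalue and a total zero count of two, $F$ could a priori still have a second simple zero that is not an eigenvalue). So you have not overlooked an idea present in the paper; your bookkeeping correctly isolates the only nontrivial point. However, as written your treatment of it --- the Jordan chain and the inhomogeneous system $M(\lambda_{0})(u_{\Phi},v_{\Phi})=-M^{\prime}(\lambda_{0})(u_{\Psi},v_{\Psi})$ --- remains a sketch; to make the proof self-contained you would need to actually carry out that iteration for the associated function (controlling the differentiated series), or else relate $F$ to the characteristic determinant of the boundary-value problem, whose zero order is known to equal the algebraic multiplicity. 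All the remaining estimates you invoke ($Q_{n},P_{n}^{\ast}=O(1)$; $P_{n},Q_{n}^{\ast}=o(1)$; $A,A^{\prime},B,B^{\prime}=O(\ln|n|/n)$; the Rouch\'e inequality on $|\lambda-(2\pi n)^{2}|=n$; the phase normalization giving $v_{n,j}=\sqrt{2}+O(n^{-1/2})$) coincide with the paper's.
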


\begin{proof}
$\left(  a\right)  $ By (10) the left hand side of (48) is $O(n^{1/2}),$ which
implies that $u_{n,j}=O(n^{-1/2}).$ Therefore from (29) we obtain (54). Now
suppose that there are two linearly independent eigenfunctions corresponding
to $\lambda_{n,j}$. Then there exists an eigenfunction satisfying
\[
\Psi_{n,j}=\sqrt{2}\sin2\pi nx+o\left(  1\right)
\]
which contradicts (54).

$(b)$ First we prove that the large eigenvalues $\lambda_{n,j}$ are the roots
of the equation (55). It follows from (54), (27) and (15) that $v_{n,j}\neq0.$
If $u_{n,j}\neq0$ then multiplying the equations (39) and (40) side by side
and then canceling $v_{n,j}u_{n,j}$ we obtain (55) . If $u_{n,j}=0$ then by
(39) and (40) we have $P_{n}+B\left(  \lambda_{n,j}\right)  =0$ \ and
$\lambda_{n,j}-\left(  2\pi n\right)  ^{2}-P_{n}^{\ast}-A^{\prime}\left(
\lambda_{n,j}\right)  =0$ which mean that (55) holds. Thus in any case
$\lambda_{n,j}$ is a root of (55).

Now we prove that the roots of (55) lying in $U(n)$\ are the eigenvalues of
$T_{1}.$ Let $F(\lambda)$ be the left-hand side of (55) which can be written
as
\begin{gather}
F(\lambda)=(\lambda-\left(  2\pi n\right)  ^{2})^{2}-\left(  Q_{n}+A\left(
\lambda\right)  +P_{n}^{\ast}+A^{\prime}\left(  \lambda\right)  \right)
\left(  \lambda-\left(  2\pi n\right)  ^{2}\right)  +\\
+\left(  Q_{n}+A\left(  \lambda\right)  \right)  \left(  P_{n}^{\ast
}+A^{\prime}\left(  \lambda\right)  \right)  -\left(  P_{n}+B\left(
\lambda\right)  \right)  \left(  \gamma_{1}n+Q_{n}^{\ast}+B^{\prime}\left(
\lambda\right)  \right) \nonumber
\end{gather}
and
\[
G(\lambda)=(\lambda-\left(  2\pi n\right)  ^{2})^{2}.
\]
One can easily verify that the inequality
\begin{equation}
\mid F(\lambda)-G(\lambda)\mid<\mid G(\lambda)\mid
\end{equation}
holds\ for all $\lambda$ from the boundary of $U(n).$ Since the function
$G(\lambda)$ has two roots in the set $U(n),$ by the Rouche's theorem we
obtain that $F(\lambda)$ has two roots in the same\ set.\ Thus\ $T_{1}$ has
two eigenvalues (counting with multiplicities) lying in $U(n)$ that are the
roots of (55). On the other hand, (55) has preciously two roots (counting with
multiplicities) in $U(n).$ Therefore $\lambda\in U(n)$ is an eigenvalue of
$T_{1}$ if and only if (55) holds.

If \textit{ }$\lambda\in U(n)$ is a double eigenvalue of $T_{1}$ then it has
no other eigenvalues\textit{ }in\textit{ }$U(n)$ and hence (55) has no other
roots. This implies that $\lambda$ is a double root of (55). By the same way
one can prove that if $\lambda$ is a double root of (55) then it is a double
eigenvalue of $T_{1}.$
\end{proof}

Let us consider (55) in detail. If we substitute $t=:\lambda-\left(  2\pi
n\right)  ^{2}$ then it becomes
\begin{gather}
t^{2}-\left(  Q_{n}+A\left(  \lambda\right)  +P_{n}^{\ast}+A^{\prime}\left(
\lambda\right)  \right)  t+\\
+\left(  Q_{n}+A\left(  \lambda\right)  \right)  \left(  P_{n}^{\ast
}+A^{\prime}\left(  \lambda\right)  \right)  -\left(  P_{n}+B\left(
\lambda\right)  \right)  \left(  \gamma_{1}n+Q_{n}^{\ast}+B^{\prime}\left(
\lambda\right)  \right)  =0.\nonumber
\end{gather}
The solutions of this equation are
\[
t_{1,2}=\frac{\left(  Q_{n}+P_{n}^{\ast}+A+A^{\prime}\right)  \pm\sqrt
{\Delta\left(  \lambda\right)  }}{2},
\]
where
\begin{equation}
\Delta\left(  \lambda\right)  =\left(  Q_{n}+P_{n}^{\ast}+A+A^{\prime}\right)
^{2}-4\left(  Q_{n}+A\right)  \left(  P_{n}^{\ast}+A^{\prime}\right)
+4\left(  P_{n}+B\right)  \left(  \gamma_{1}n+Q_{n}^{\ast}+B^{\prime}\right)
\nonumber
\end{equation}
which can be written in the form
\begin{equation}
\Delta\left(  \lambda\right)  =\left(  Q_{n}-P_{n}^{\ast}+A-A^{\prime}\right)
^{2}+4\left(  P_{n}+B\right)  \left(  \gamma_{1}n+Q_{n}^{\ast}+B^{\prime
}\right)  .
\end{equation}
Clearly the eigenvalue $\lambda_{n,j}$\ is a root either of the equation
\begin{equation}
\lambda=\left(  2\pi n\right)  ^{2}+\frac{1}{2}\left[  \left(  Q_{n}%
+P_{n}^{\ast}+A+A^{\prime}\right)  -\sqrt{\Delta\left(  \lambda\right)
}\right]
\end{equation}
or of the equation%
\begin{equation}
\lambda=\left(  2\pi n\right)  ^{2}+\frac{1}{2}\left[  \left(  Q_{n}%
+P_{n}^{\ast}+A+A^{\prime}\right)  +\sqrt{\Delta\left(  \lambda\right)
}\right]  .
\end{equation}
Now let us examine $\Delta\left(  \lambda_{n,j}\right)  $ in detail. If (8)
holds then one can readily see from (34), (38), (50), (51) and (59) that%
\begin{equation}
\Delta\left(  \lambda_{n,j}\right)  =2\gamma_{1}ns_{2n}(1+o(1)).
\end{equation}
Taking into account the last three equality and (34), (38), (50), (51), we see
that (60) and (61) have the form
\begin{equation}
\lambda=\left(  2\pi n\right)  ^{2}-\frac{\sqrt{2\gamma_{1}}}{2}\sqrt{ns_{2n}%
}(1+o(1)),
\end{equation}%
\begin{equation}
\lambda=\left(  2\pi n\right)  ^{2}+\frac{\sqrt{2\gamma_{1}}}{2}\sqrt{ns_{2n}%
}(1+o(1)).
\end{equation}

\begin{theorem}
If (8) holds, then the large eigenvalues $\lambda_{n,j}$ are simple and
satisfy the following asymptotic formulas%
\begin{equation}
\lambda_{n,j}=\left(  2\pi n\right)  ^{2}+\left(  -1\right)  ^{j}\frac
{\sqrt{2\gamma_{1}}}{2}\sqrt{ns_{2n}}(1+o(1)).
\end{equation}
for $j=1,2.$ Moreover, if there exists a sequence $\left\{  n_{k}\right\}  $
such that (8) holds when $n$ is replaced by $n_{k},$ then the root functions
of $T_{1}$ do not form a Riesz basis.
\end{theorem}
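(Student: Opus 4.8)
For the simplicity statement and the asymptotics (66): by Theorem~1$(b)$, a point of $U(n)$ is an eigenvalue of $T_{1}$ exactly when it solves (55), equivalently one of the branch equations (60), (61), and $U(n)$ contains exactly two eigenvalues counted with multiplicity. Under (8) one has $s_{2n}\neq0$ for large $n$, $|ns_{2n}|\to\infty$, and, since the functions $A,A',B,B'$ are holomorphic on $U(n)$ with the bounds (34), (38) holding uniformly there, the estimate (62) holds uniformly for $\lambda\in U(n)$; hence $\Delta(\lambda)\neq0$ on $U(n)$ for large $n$, it has a single-valued holomorphic square root there, and $\sqrt{\Delta(\lambda)}=\sqrt{2\gamma_{1}ns_{2n}}\,(1+o(1))$ uniformly, with $|\sqrt{2\gamma_{1}ns_{2n}}|\to\infty$ dominating $Q_{n},P_{n}^{\ast}=O(1)$ and $A,A'=O(\ln n/n)$. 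Writing $\Phi_{j}(\lambda)$ for the right-hand side of (60) when $j=1$ and of (61) when $j=2$, we obtain $\Phi_{j}(\lambda)=(2\pi n)^{2}+(-1)^{j}\tfrac12\sqrt{2\gamma_{1}}\sqrt{ns_{2n}}\,(1+o(1))$ uniformly on $U(n)$; as $|s_{2n}|$ is bounded, $|\Phi_{j}(\lambda)-(2\pi n)^{2}|<n$ for large $n$, so each $\Phi_{j}$ is a continuous self-map of the closed disc $U(n)$. A fixed-point argument (Brouwer, or Rouch\'e applied to $\lambda-\Phi_{j}(\lambda)$ exactly as in the proof of Theorem~1$(b)$) gives a root of (60) and a root of (61) in $U(n)$; by the formula just displayed these equal $(2\pi n)^{2}+(-1)^{j}\tfrac12\sqrt{2\gamma_{1}}\sqrt{ns_{2n}}(1+o(1))$, hence are distinct once $n$ is large because $|\sqrt{ns_{2n}}|\to\infty$. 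Since $U(n)$ contains only two eigenvalues counted with multiplicity, these two roots are precisely $\lambda_{n,1},\lambda_{n,2}$, each simple, and (66) follows.

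Now restrict to $n=n_{k}$. For large $n$, $P_{n}+B(\lambda_{n,j})=\tfrac12 s_{2n}(1+o(1))\neq0$ by (8), so (39) forces $u_{n,j}\neq0$ (otherwise $v_{n,j}=0$ too, contradicting (30)), and
\[
\frac{v_{n,j}}{u_{n,j}}=\frac{\lambda_{n,j}-(2\pi n)^{2}-Q_{n}-A(\lambda_{n,j})}{P_{n}+B(\lambda_{n,j})}=\frac{(-1)^{j}\tfrac12\sqrt{2\gamma_{1}}\sqrt{ns_{2n}}(1+o(1))}{\tfrac12 s_{2n}(1+o(1))},
\]
whose modulus tends to $\infty$ (the numerator does, by (66), while $|s_{2n}|$ is bounded). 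Combined with the normalization relation (30), this forces $u_{n_{k},j}\to0$ and $|v_{n_{k},j}|\to\sqrt2$. In particular $v_{n_{k},1}/v_{n_{k},2}$ is bounded and bounded away from $0$, and by (29) the two normalized eigenfunctions have the same principal term up to a bounded nonvanishing scalar, $\Psi_{n_{k},j}=v_{n_{k},j}\cos2\pi n_{k}x+o(1)$.

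Assume the root functions of $T_{1}$ form a Riesz basis. Then there is $c>0$ with $\|a\Psi+a'\Psi'\|^{2}\ge c(|a|^{2}+|a'|^{2})$ for any two distinct normalized root functions $\Psi,\Psi'$ and all scalars $a,a'$. For large $k$ the eigenvalues $\lambda_{n_{k},1}\neq\lambda_{n_{k},2}$ are simple by the first part, so $\Psi_{n_{k},1}$ and $\Psi_{n_{k},2}$ are two distinct members of the system; applying the bound with $a=1$, $a'=-v_{n_{k},1}/v_{n_{k},2}$ and using (29), which makes the $\cos2\pi n_{k}x$ terms cancel,
\[
\Psi_{n_{k},1}-\frac{v_{n_{k},1}}{v_{n_{k},2}}\,\Psi_{n_{k},2}=\Bigl(u_{n_{k},1}-\frac{v_{n_{k},1}}{v_{n_{k},2}}u_{n_{k},2}\Bigr)\varphi_{n_{k}}(x)+O\!\left(\frac{\ln n_{k}}{n_{k}}\right).
\]
The coefficient vector has $\ell^{2}$-norm $\sqrt{1+|v_{n_{k},1}/v_{n_{k},2}|^{2}}\to\sqrt2$, so the norm on the left is at least $c\sqrt2+o(1)$; but $u_{n_{k},j}\to0$, $|v_{n_{k},1}/v_{n_{k},2}|\to1$, and $\|\varphi_{n_{k}}\|=O(1)$ make that norm tend to $0$, a contradiction. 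Hence the root functions of $T_{1}$ do not form a Riesz basis.

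The main obstacle is the simplicity claim in the first paragraph: once $u_{n_{k},j}\to0$ is in hand, the eigenfunctions $\Psi_{n_{k},1},\Psi_{n_{k},2}$ are asymptotically parallel and the rest is routine. Proving simplicity requires the uniform-on-$U(n)$ form of (62), the nonvanishing of $\Delta$ there, and a clean separation of the two roots of (55) into the disjoint discs about $(2\pi n)^{2}\pm\tfrac12\sqrt{2\gamma_{1}}\sqrt{ns_{2n}}$; this is delicate because the inter-root distance $\sqrt{2\gamma_{1}}\sqrt{ns_{2n}}$, though unbounded, may exceed the $O(\ln n)$ remainders hidden inside $\Delta$ by only a slim margin, so those remainders have to be tracked quantitatively rather than merely absorbed into an $o(1)$.
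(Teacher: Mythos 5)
Your argument is correct, and its first half follows the paper's route: like the paper, you split (55) into the two branch equations (60), (61), verify $\Delta(\lambda)=2\gamma_{1}ns_{2n}(1+o(1))$ so that $\sqrt{\Delta}$ is well defined and dominates the $O(1)$ and $O(\ln n/n)$ terms, produce one root of each branch in $U(n)$ by a fixed-point argument, and conclude simplicity from the distinctness of the two roots together with the count of exactly two eigenvalues in $U(n)$ from Theorem 1(b). The only difference there is technical: the paper runs the Banach fixed-point theorem, establishing Lipschitz bounds $K_{1}+K_{2}+K_{3}<1$ for $A$, $A'$ and $\sqrt{\Delta}$ (its (66)--(67)), which gives existence and uniqueness of the root of each branch in one stroke, whereas you get existence from Brouwer/Rouch\'e and recover uniqueness a posteriori from the eigenvalue count; both are legitimate, and the paper's contraction estimates are the ``quantitative tracking'' you worry about in your closing paragraph, so that concern is already resolved by the hypothesis (8), which makes all remainders $o$ of the root separation $\sqrt{ns_{2n}}$. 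Where you genuinely diverge is the Riesz-basis part: the paper simply notes from Theorem 1(a) that $(\Psi_{n_{k},1},\Psi_{n_{k},2})=1+O(n_{k}^{-1/2})$, i.e.\ the two normalized eigenfunctions are asymptotically parallel, and then cites Shkalikov's theorems and Lemma 3 of [24] for the conclusion; you instead unwind that lemma and give a self-contained contradiction with the lower frame inequality, using the cancellation of the $\cos 2\pi n_{k}x$ terms in $\Psi_{n_{k},1}-(v_{n_{k},1}/v_{n_{k},2})\Psi_{n_{k},2}$ and the fact that $u_{n_{k},j}\rightarrow 0$. That costs you an extra page of computation (and note that your intermediate derivation of $u_{n,j}\to 0$ via the ratio $v_{n,j}/u_{n,j}$ is redundant, since Theorem 1(a) already gives $u_{n,j}=O(n^{-1/2})$ unconditionally), but it buys a proof that does not lean on external references; the only small slip is cosmetic, namely that the lower bound on the norm should read $\sqrt{2c}+o(1)$ rather than $c\sqrt{2}+o(1)$, which does not affect the contradiction.
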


\begin{proof}
To prove that the large eigenvalues $\lambda_{n,j}$ are simple let us show
that one of the eigenvalues, say $\lambda_{n,1}$ satisfies (65) for $j=1$ and
the other $\lambda_{n,2}$ satisfies (65) for $j=2.$ Let us prove that each of
the equations (60) and (61) has a unique root in $U(n)$ by proving that
\[
\left(  2\pi n\right)  ^{2}+\frac{1}{2}\left[  \left(  Q_{n}+P_{n}^{\ast
}+A+A^{\prime}\right)  \pm\sqrt{\Delta\left(  \lambda\right)  }\right]
\]
is a contraction mapping. For this we show that there exist positive real
numbers $K_{1},K_{2},K_{3}$ such that%
\begin{equation}
\mid A\left(  \lambda\right)  -A(\mu)\mid<K_{1}\mid\lambda-\mu\mid,\text{
}\mid A^{\prime}(\lambda)-A^{\prime}(\mu)\mid<K_{2}\mid\lambda-\mu\mid,
\end{equation}%
\begin{equation}
\left\vert \sqrt{\Delta\left(  \lambda\right)  }-\sqrt{\Delta\left(
\mu\right)  }\right\vert <K_{3}\mid\lambda-\mu\mid,
\end{equation}
where $K_{1}+K_{2}+K_{3}<1$. The proof of (66) is similar to the proof of (56)
of the paper [26].

Now let us prove (67). By (62) and (8) we have
\[
\left(  \sqrt{\Delta\left(  \lambda\right)  }\right)  ^{-1}=o(1).
\]
On the other hand arguing as in the proof of (56) of the paper [26] we get
\[
\dfrac{d}{d\lambda}\Delta\left(  \lambda\right)  =O(1).
\]
Hence in any case we have
\[
\frac{d}{d\lambda}\sqrt{\Delta\left(  \lambda\right)  }=\frac{\dfrac
{d}{d\lambda}\Delta\left(  \lambda\right)  }{2\sqrt{\Delta\left(
\lambda\right)  }}=o(1).
\]
Thus by the fixed point theorem, each of the equations (60) and (61) has a
unique root $\lambda_{1}$ and $\lambda_{2}$ respectively. Clearly by (63) and
(64), we have $\lambda_{1}\neq\lambda_{2}$ which implies that the equation
(55) has two simple root in $U\left(  n\right)  .$ Therefore by Theorem 1(b),
$\lambda_{1}$ and $\lambda_{2}$ are the eigenvalues of $T_{1}$ lying in
$U\left(  n\right)  ,$ that is, they are $\lambda_{n,1}$ and $\lambda_{n,2}$,
which proves the simplicity of the large eigenvalues and the validity of (65).

If there exists a sequence $\left\{  n_{k}\right\}  $ such that (8) holds when
$n$ is replaced by $n_{k}$, then by Theorem 1(a)
\[
\left(  \Psi_{n_{k},1},\Psi_{n_{k},2}\right)  =1+O\left(  n_{k}^{-1/2}\right)
.
\]
Now it follows from the theorems of [20,21] (see also Lemma 3 of [24]) that
the root functions of $T_{1}$ do not form a Riesz basis.
\end{proof}

Now let us consider the operators $T_{2}$, $T_{3}$ and $T_{4}.$ First we
consider the operator $T_{3}$.

It is well-known that ( see formulas (47a), (47b)) in page 65 of [18] ) the
eigenvalues of the operators $T_{3}(q)$ consist of the sequences
$\{\lambda_{n,1,3}\},\{\lambda_{n,2,3}\}$ satisfying (10) when $\lambda_{n,j}$
is replaced by $\lambda_{n,j,3}.$ The eigenvalues, eigenfunctions and
associated functions of $T_{3}$ are
\begin{align*}
\lambda_{n} &  =\left(  2\pi n\right)  ^{2};\text{ }n=0,1,2,\ldots\\
y_{0}\left(  x\right)   &  =x-\dfrac{\alpha}{1+\alpha},\text{ }y_{n}\left(
x\right)  =\sin2\pi nx;\text{ }n=1,2,\ldots\\
\phi_{n}\left(  x\right)   &  =\left(  x-\dfrac{\alpha}{1+\alpha}\right)
\frac{\cos2\pi nx}{4\pi n};\text{ }n=1,2,\ldots.
\end{align*}
respectively. The biorthogonal systems analogous to (16), (17) are%
\begin{equation}
\left\{  \cos2\pi nx,\frac{4\left(  1+\overline{\alpha}\right)  }%
{1-\overline{\alpha}}\left(  \dfrac{1}{1+\overline{\alpha}}-x\right)  \sin2\pi
nx\right\}  _{n=0}^{\infty}%
\end{equation}%
\begin{equation}
\left\{  \sin2\pi nx,\frac{4\left(  1+\alpha\right)  }{1-\alpha}\left(
x-\dfrac{\alpha}{1+\alpha}\right)  \cos2\pi nx\right\}  _{n=0}^{\infty}%
\end{equation}
respectively.

Analogous formulas to (18) and (19) are%
\begin{equation}
\left(  \lambda_{N,j}-\left(  2\pi n\right)  ^{2}\right)  \left(  \Psi
_{N,j},\cos2\pi nx\right)  =\left(  q\left(  x\right)  \Psi_{N,j},\cos2\pi
nx\right)
\end{equation}%
\begin{equation}
\left(  \lambda_{N,j}-\left(  2\pi n\right)  ^{2}\right)  \left(  \Psi
_{N,j},\varphi_{n}^{\ast}\right)  -\gamma_{3}n\left(  \Psi_{N,j},\cos2\pi
nx\right)  =\left(  q\left(  x\right)  \Psi_{N,j},\varphi_{n}^{\ast}\right)
\end{equation}
respectively, where
\[
\gamma_{3}=\frac{16\pi\left(  1+\alpha\right)  }{1-\alpha}.
\]

Instead of (16)-(19) using (68)-(71) and arguing as in the proofs of Theorem 1
and Theorem 2 we obtain the following results for $T_{3}.$

\begin{theorem}
If (8) holds, then the large eigenvalues $\lambda_{n,j,3}$ are simple and
satisfy the following asymptotic formulas%
\begin{equation}
\lambda_{n,j,3}=\left(  2\pi n\right)  ^{2}+\left(  -1\right)  ^{j}\frac
{\sqrt{2\gamma_{3}}}{2}\sqrt{ns_{2n}}(1+o(1)).
\end{equation}
for $j=1,2.$ The eigenfunctions $\Psi_{n,j,3}$ corresponding to $\lambda
_{n,j,3}$ obey
\begin{equation}
\Psi_{n,j,3}=\sqrt{2}\sin2\pi nx+O\left(  n^{-1/2}\right)  .
\end{equation}
Moreover, if there exists a sequence $\left\{  n_{k}\right\}  $ such that (8)
holds when $n$ is replaced by $n_{k},$ then the root functions of $T_{3}$ do
not form a Riesz basis.
\end{theorem}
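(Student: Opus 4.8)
The plan is to transcribe, line for line, the arguments of Theorems 1 and 2, replacing the biorthogonal root-function systems (16), (17) of $T_{1}^{\ast}(0)$, $T_{1}(0)$ by the systems (68), (69) of $T_{3}^{\ast}(0)$, $T_{3}(0)$, and the relations (18), (19) by (70), (71), with $\gamma_{1}$ replaced throughout by $\gamma_{3}$. First I would expand a normalized eigenfunction $\Psi_{n,j,3}$ of $T_{3}(q)$ in the basis (69), writing $\Psi_{n,j,3}=\tilde u_{n,j}\varphi_{n}(x)+\tilde v_{n,j}\sin 2\pi nx+\tilde h_{n,j}(x)$, where now $\varphi_{n}$ denotes the $\cos$-type element of (69), $\varphi_{n}^{\ast}$ the $\sin$-type element of (68), $\tilde u_{n,j}=(\Psi_{n,j,3},\cos 2\pi nx)$ and $\tilde v_{n,j}=(\Psi_{n,j,3},\varphi_{n}^{\ast})$. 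Just as in (28)--(30), the bounds (10), (24), (25) give $\sup|\tilde h_{n,j}|=O(\ln n/n)$ and, from $\|\Psi_{n,j,3}\|=1$, the relation $\tilde a|\tilde u_{n,j}|^{2}+\tfrac12|\tilde v_{n,j}|^{2}=1+O(\ln n/n)$ with a constant $\tilde a=\tilde a(\alpha)\ne0$. Iterating (70), (71) against the decompositions analogous to (22), (23) — legitimate since $\sup|q_{m}|<\infty$, cf.\ Lemma 1 of [23] — produces, after passing to the limit as in (39), (40), the $2\times2$ system
\begin{gather*}
[\lambda_{n,j,3}-(2\pi n)^{2}-\tilde Q_{n}-\tilde A(\lambda_{n,j,3})]\,\tilde u_{n,j}=[\tilde P_{n}+\tilde B(\lambda_{n,j,3})]\,\tilde v_{n,j},\\
[\lambda_{n,j,3}-(2\pi n)^{2}-\tilde P_{n}^{\ast}-\tilde A^{\prime}(\lambda_{n,j,3})]\,\tilde v_{n,j}=[\gamma_{3}n+\tilde Q_{n}^{\ast}+\tilde B^{\prime}(\lambda_{n,j,3})]\,\tilde u_{n,j},
\end{gather*}
with $\tilde A,\tilde B,\tilde A^{\prime},\tilde B^{\prime}=O(\ln|n|/n)$.

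Next I would establish the analogue of Theorem 1. By (10) the bracket on the left of the first equation is $O(n^{1/2})$, which forces $\tilde u_{n,j}=O(n^{-1/2})$; then the normalization relation gives $\tilde v_{n,j}=\sqrt2+o(1)$, hence (75). A second eigenfunction independent of $\Psi_{n,j,3}$ would have to be $\sqrt2\,\varphi_{n}(x)/\|\varphi_{n}\|+o(1)$, i.e.\ a multiple of $(x-\alpha/(1+\alpha))\cos2\pi nx$ up to $o(1)$, contradicting (75); this shows the geometric multiplicity of $\lambda_{n,j,3}$ is $1$ for $n>N$. Eliminating $\tilde u_{n,j},\tilde v_{n,j}$ exactly as in the proof of Theorem 1(b) — together with the Rouch\'e comparison of $F(\lambda)$ with $G(\lambda)=(\lambda-(2\pi n)^{2})^{2}$ on $\partial U(n)$ — shows that $\lambda\in U(n)$ is an eigenvalue of $T_{3}$ iff it is a root of the characteristic equation obtained by clearing $\tilde u_{n,j},\tilde v_{n,j}$ from the $2\times2$ system, with double eigenvalues corresponding to double roots.

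Then I would repeat the proof of Theorem 2. Evaluating the coefficients for the boundary conditions (4) gives $\tilde P_{n}=(q\cos2\pi nx,\sin2\pi nx)=\tfrac12 s_{2n}$, while $\tilde Q_{n}$, $\tilde P_{n}^{\ast}$, $\tilde Q_{n}^{\ast}$ are $o(1)$ — linear combinations of $\int_{0}^{1}xq\,dx$, $c_{2n}$, $c_{2n,1}$, $s_{2n,1}$, $s_{2n,2}$ as in (49)--(53) with $\beta$ replaced by $\alpha$. Hence the discriminant of the system, written as in (58)--(59), satisfies $\tilde\Delta(\lambda_{n,j,3})=2\gamma_{3}ns_{2n}(1+o(1))$ under (8), and the two branches (the analogues of (60), (61)) take the form $\lambda=(2\pi n)^{2}\pm\tfrac{\sqrt{2\gamma_{3}}}{2}\sqrt{ns_{2n}}(1+o(1))$. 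Since $(\sqrt{\tilde\Delta(\lambda)})^{-1}=o(1)$ and $\tfrac{d}{d\lambda}\tilde\Delta(\lambda)=O(1)$, each branch, viewed as a map of $U(n)$ into itself, is a contraction, so it has exactly one fixed point; the two fixed points differ by the $\pm$ sign, so the characteristic equation has two simple roots in $U(n)$, which by the analogue of Theorem 1(b) are precisely $\lambda_{n,1,3}\ne\lambda_{n,2,3}$ — this proves simplicity of the large eigenvalues and (74).

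Finally, for the Riesz-basis assertion: if (8) holds along a subsequence $\{n_{k}\}$, the analogue of (75) gives $\Psi_{n_{k},1,3}=\sqrt2\sin2\pi n_{k}x+O(n_{k}^{-1/2})=\Psi_{n_{k},2,3}+O(n_{k}^{-1/2})$, so $(\Psi_{n_{k},1,3},\Psi_{n_{k},2,3})=1+O(n_{k}^{-1/2})\to1$ even though both are normalized; by Shkalikov's theorems [20, 21] the root functions of a regular but not strongly regular problem can at best be assembled into a Riesz basis of two-dimensional subspaces, and Lemma 3 of [24] shows this is impossible when a pair of normalized eigenvectors has inner product tending to $1$. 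Hence the root functions of $T_{3}$ do not form a Riesz basis. The only step needing genuine computation is the evaluation of $\tilde P_{n},\tilde Q_{n},\tilde P_{n}^{\ast},\tilde Q_{n}^{\ast}$ — in particular the fact that $\tilde P_{n}=\tfrac12 s_{2n}+o(1)$, which is exactly what makes condition (8), stated via $s_{2n}$, the governing hypothesis for $T_{3}$ just as for $T_{1}$; everything else is a verbatim transcription of the arguments already given.
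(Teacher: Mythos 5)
Your proposal is correct and is exactly the route the paper takes: the paper's entire proof of this theorem is the remark that one replaces the systems (16), (17) and relations (18), (19) by (68)--(71) (with $\gamma_{1}\mapsto\gamma_{3}$ and the roles of $\sin 2\pi nx$ and $\cos 2\pi nx$ interchanged) and repeats the arguments of Theorems 1 and 2, which is what you have carried out. Your identification of $\tilde P_{n}=\tfrac12 s_{2n}$ as the reason condition (8) governs $T_{3}$ as well as $T_{1}$ is precisely the computation the paper leaves implicit.
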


Now let us consider the operator $T_{2}$. It is well-known that ( see formulas
(47a), (47b)) in page 65 of [18] ) the eigenvalues of the operators $T_{2}(q)$
consist of the sequences $\{\lambda_{n,1,2}\},\{\lambda_{n,2,2}\}$ satisfying
\begin{equation}
\lambda_{n,j,2}=(2n\pi+\pi)^{2}+O(n^{1/2}),
\end{equation}
for $j=1,2$. The eigenvalues, eigenfunctions and associated functions of
$T_{2}$ are
\begin{align*}
\lambda_{n} &  =\left(  \pi+2\pi n\right)  ^{2},\text{ }y_{n}\left(  x\right)
=\cos\left(  2n+1\right)  \pi x,\\
\phi_{n}\left(  x\right)   &  =\left(  \frac{\beta}{\beta-1}-x\right)
\frac{\sin\left(  2n+1\right)  \pi x}{2\left(  2n+1\right)  \pi}%
\end{align*}
for $n=0,1,2,\ldots$respectively. The biorthogonal systems analogous to (16),
(17) are
\begin{equation}
\left\{  \sin\left(  2n+1\right)  \pi x,\frac{4\left(  \overline{\beta
}-1\right)  }{\overline{\beta}+1}\left(  x+\dfrac{1}{\overline{\beta}%
-1}\right)  \cos\left(  2n+1\right)  \pi x\right\}  _{n=0}^{\infty}%
\end{equation}%
\begin{equation}
\left\{  \cos\left(  2n+1\right)  \pi x,\frac{4\left(  \beta-1\right)  }%
{\beta+1}\left(  \frac{\beta}{\beta-1}-x\right)  \sin\left(  2n+1\right)  \pi
x\right\}  _{n=0}^{\infty}%
\end{equation}
respectively.

Analogous formulas to (18) and (19) are%
\begin{equation}
\left(  \lambda_{N,j}-\left(  \left(  2n+1\right)  \pi\right)  ^{2}\right)
\left(  \Psi_{N,j},\sin\left(  2n+1\right)  \pi x\right)  =\left(  q\left(
x\right)  \Psi_{N,j},\sin\left(  2n+1\right)  \pi x\right)
\end{equation}%
\begin{equation}
\left(  \lambda_{N,j}-\left(  \left(  2n+1\right)  \pi\right)  ^{2}\right)
\left(  \Psi_{N,j},\varphi_{n}^{\ast}\right)  -\left(  2n+1\right)  \gamma
_{2}\left(  \Psi_{N,j},\sin\left(  2n+1\right)  \pi x\right)  =\left(
q\left(  x\right)  \Psi_{N,j},\varphi_{n}^{\ast}\right)
\end{equation}
respectively, where
\[
\gamma_{2}=\frac{8\pi\left(  \beta-1\right)  }{\beta+1}.
\]
Instead of (16)-(19) using (75)-(78) and arguing as in the proofs of Theorem 1
and Theorem 2 we obtain the following results for $T_{2}.$

\begin{theorem}
If (9) holds, then the large eigenvalues $\lambda_{n,j,2}$ are simple and
satisfy the following asymptotic formulas%
\begin{equation}
\lambda_{n,j,2}=\left(  \left(  2n+1\right)  \pi\right)  ^{2}+\left(
-1\right)  ^{j}\frac{\sqrt{2\gamma_{2}}}{2}\sqrt{\left(  2n+1\right)
s_{2n+1}}(1+o(1)).
\end{equation}
for $j=1,2.$ The eigenfunctions $\Psi_{n,j,2}$ corresponding to $\lambda
_{n,j,2}$ obey
\begin{equation}
\Psi_{n,j,2}=\sqrt{2}\cos\left(  2n+1\right)  \pi x+O\left(  n^{-1/2}\right)
.
\end{equation}
Moreover, if there exists a sequence $\left\{  n_{k}\right\}  $ such that (9)
holds when $n$ is replaced by $n_{k},$ then the root functions of $T_{2}$ do
not form a Riesz basis.
\end{theorem}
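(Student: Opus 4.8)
The plan is to run the arguments of Theorems 1 and 2 with the obvious dictionary: the spacing $2\pi n$ of the free spectrum is replaced by $(2n+1)\pi$; the pair $\{\sin2\pi nx,\cos2\pi nx\}$ by $\{\sin(2n+1)\pi x,\cos(2n+1)\pi x\}$; the biorthogonal systems (16)--(17) by (75)--(76); the relations (18)--(19) by (77)--(78); and $\gamma_{1}$ by $\gamma_{2}$. The gap estimate (11) is replaced by its analogue obtained from (74): for $k\neq n$ and $n\geq N$,
\[
\left\vert\lambda_{n,j,2}-((2k+1)\pi)^{2}\right\vert=\left\vert2(n-k)\pi\right\vert\left\vert2(n+k+1)\pi\right\vert+O(n^{1/2})>n.
\]

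First I would expand a normalized eigenfunction $\Psi_{n,j,2}$ in the basis (76), writing
\[
\Psi_{n,j,2}=u_{n,j}\varphi_{n}(x)+v_{n,j}\cos(2n+1)\pi x+h_{n,j}(x),\qquad u_{n,j}=(\Psi_{n,j,2},\sin(2n+1)\pi x),\ \ v_{n,j}=(\Psi_{n,j,2},\varphi_{n}^{\ast}),
\]
and, exactly as in (28)--(30), $\sup_{x}\left\vert h_{n,j}(x)\right\vert=O(n^{-1}\ln n)$, so that $\Psi_{n,j,2}=u_{n,j}\varphi_{n}(x)+v_{n,j}\cos(2n+1)\pi x+O(n^{-1}\ln n)$ and the normalization yields $a\left\vert u_{n,j}\right\vert^{2}+\tfrac12\left\vert v_{n,j}\right\vert^{2}=1+O(n^{-1}\ln n)$ with some $a\neq0$ depending on $\beta$. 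Iterating (77) and (78) exactly as in the derivation of (39)--(40) produces the system
\begin{align*}
\left[\lambda_{n,j,2}-((2n+1)\pi)^{2}-Q_{n}-A(\lambda)\right]u_{n,j}&=\left[P_{n}+B(\lambda)\right]v_{n,j},\\
\left[\lambda_{n,j,2}-((2n+1)\pi)^{2}-P_{n}^{\ast}-A^{\prime}(\lambda)\right]v_{n,j}&=\left[(2n+1)\gamma_{2}+Q_{n}^{\ast}+B^{\prime}(\lambda)\right]u_{n,j},
\end{align*}
where $A,A^{\prime},B,B^{\prime}=O(n^{-1}\ln n)$, the quantities $Q_{n},P_{n}^{\ast},Q_{n}^{\ast}$ are $o(1)$, and, crucially,
\[
P_{n}=(q\cos(2n+1)\pi x,\sin(2n+1)\pi x)=\tfrac12(q,\sin2\pi(2n+1)x)=\tfrac12 s_{2n+1}.
\]
The left-hand side of the second equation is $O(n^{1/2})$ by (74) while the bracket on its right-hand side equals $(2n+1)\gamma_{2}+o(n)$, hence $u_{n,j}=O(n^{-1/2})$, so $\Psi_{n,j,2}=\sqrt{2}\cos(2n+1)\pi x+O(n^{-1/2})$, which is (80). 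If $\lambda_{n,j,2}$ had a two-dimensional eigenspace, a suitable combination would be a normalized eigenfunction with $v_{n,j}=0$, forced by the second equation to have $u_{n,j}=0$ as well, i.e. to vanish; thus the geometric multiplicity of $\lambda_{n,j,2}$ is $1$ for $n>N$.

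Next, as in Theorem 1(b), multiplying the two equations of the system and cancelling $u_{n,j}v_{n,j}$ (the case $u_{n,j}=0$ handled separately) shows that $\lambda\in U(n):=\{\lambda:\left\vert\lambda-((2n+1)\pi)^{2}\right\vert\leq n\}$ is an eigenvalue of $T_{2}$ iff it is a root of the resulting equation $F(\lambda)=0$, and the Rouch\'e comparison of $F$ with $G(\lambda)=(\lambda-((2n+1)\pi)^{2})^{2}$ on $\partial U(n)$ gives exactly two roots of $F$ in $U(n)$ (with multiplicity). Setting $t=\lambda-((2n+1)\pi)^{2}$ and solving the quadratic gives $t_{1,2}=\tfrac12[(Q_{n}+P_{n}^{\ast}+A+A^{\prime})\pm\sqrt{\Delta(\lambda)}]$ with $\Delta(\lambda)=(Q_{n}-P_{n}^{\ast}+A-A^{\prime})^{2}+4(P_{n}+B)((2n+1)\gamma_{2}+Q_{n}^{\ast}+B^{\prime})$. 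Under (9) the dominant term of $\Delta$ is $4\cdot\tfrac12 s_{2n+1}\cdot(2n+1)\gamma_{2}$, so $\Delta(\lambda_{n,j,2})=2\gamma_{2}(2n+1)s_{2n+1}(1+o(1))\to\infty$, the exact analogue of (62), and $(\sqrt{\Delta})^{-1}=o(1)$. Then, as in the proof of Theorem 2, each branch $\lambda\mapsto((2n+1)\pi)^{2}+\tfrac12[(Q_{n}+P_{n}^{\ast}+A+A^{\prime})\pm\sqrt{\Delta(\lambda)}]$ is a contraction of $U(n)$ into itself, using the Lipschitz estimates of type (66) for $A,A^{\prime}$ together with $\frac{d}{d\lambda}\sqrt{\Delta}=\frac{\Delta^{\prime}}{2\sqrt{\Delta}}=o(1)$ (since $\Delta^{\prime}=O(1)$); hence it has a unique fixed point, and the two fixed points are distinct because their difference is $\sqrt{\Delta}(1+o(1))\neq0$. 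Therefore $F$ has two simple roots in $U(n)$, which are $\lambda_{n,1,2}$ and $\lambda_{n,2,2}$; this gives their simplicity and the asymptotic formula (79) with the sign $(-1)^{j}$.

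Finally, for the Riesz basis obstruction, if (9) holds along a subsequence $\{n_{k}\}$ then by (80) both $\Psi_{n_{k},1,2}$ and $\Psi_{n_{k},2,2}$ equal $\sqrt{2}\cos(2n_{k}+1)\pi x+O(n_{k}^{-1/2})$, whence $(\Psi_{n_{k},1,2},\Psi_{n_{k},2,2})=1+O(n_{k}^{-1/2})\to1$; a family of vectors containing two normalized members whose inner product tends to $1$ cannot satisfy the lower bound of a Riesz basis, so by the theorems of [20,21] (equivalently Lemma 3 of [24]) the root functions of $T_{2}$ do not form a Riesz basis. The \emph{only} place where (9) is used essentially --- and the crux of the whole argument --- is the identity $P_{n}=\tfrac12 s_{2n+1}$ and the resulting order of magnitude of $\Delta$; everything else is the bookkeeping already performed for $T_{1}$, the sole structural change being the antiperiodic-type spacing $(2n+1)\pi$ of the free spectrum, which is exactly why the splitting of the eigenvalues near $((2n+1)\pi)^{2}$ is governed by the odd coefficients $s_{2n+1}$ rather than by $s_{2n}$.
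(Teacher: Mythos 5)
Your proposal is correct and follows essentially the same route as the paper, whose proof of this theorem consists precisely of the instruction to repeat the arguments of Theorems 1 and 2 with (16)--(19) replaced by (75)--(78) and $\gamma_{1}n$ by $\gamma_{2}(2n+1)$. Your identification of $P_{n}=\tfrac12 s_{2n+1}$ as the source of the $\sqrt{(2n+1)s_{2n+1}}$ splitting, and the resulting asymptotics $\Delta(\lambda_{n,j,2})=2\gamma_{2}(2n+1)s_{2n+1}(1+o(1))$, is exactly the intended adaptation.
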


Lastly we consider the operator $T_{4}$. It is well-known that ( see formulas
(47a), (47b)) in page 65 of [18] ) the eigenvalues of the operators $T_{4}(q)$
consist of the sequences $\{\lambda_{n,1,4}\},\{\lambda_{n,2,4}\}$ satisfying
(74) when $\lambda_{n,j,2}$ is replaced by $\lambda_{n,j,4}.$ The eigenvalues,
eigenfunctions and associated functions of $T_{4}$ are
\begin{align*}
\lambda_{n} &  =\left(  \pi+2\pi n\right)  ^{2},\text{ }y_{n}\left(  x\right)
=\sin\left(  2n+1\right)  \pi x,\\
\phi_{n}\left(  x\right)   &  =\left(  \frac{\alpha}{1-\alpha}+x\right)
\frac{\cos\left(  2n+1\right)  \pi x}{2\left(  2n+1\right)  \pi}%
\end{align*}
for $n=0,1,2,\ldots$respectively. The biorthogonal systems analogous to (16),
(17) are
\begin{equation}
\left\{  \cos\left(  2n+1\right)  \pi x,\frac{4\left(  1-\overline{\alpha
}\right)  }{1+\overline{\alpha}}\left(  \dfrac{1}{1-\overline{\alpha}%
}-x\right)  \sin\left(  2n+1\right)  \pi x\right\}  _{n=0}^{\infty}%
\end{equation}%
\begin{equation}
\left\{  \sin\left(  2n+1\right)  \pi x,\frac{4\left(  1-\alpha\right)
}{1+\alpha}\left(  \dfrac{\alpha}{1-\alpha}+x\right)  \cos\left(  2n+1\right)
\pi x\right\}  _{n=0}^{\infty}%
\end{equation}
respectively.

Analogous formulas to (18) and (19) are
\begin{equation}
\left(  \lambda_{N,j}-\left(  \pi+2\pi n\right)  ^{2}\right)  \left(
\Psi_{N,j},\cos\left(  2n+1\right)  \pi x\right)  =\left(  q\left(  x\right)
\Psi_{N,j},\cos\left(  2n+1\right)  \pi x\right)  ,
\end{equation}%
\begin{equation}
\left(  \lambda_{N,j}-\left(  \left(  2n+1\right)  \pi\right)  ^{2}\right)
\left(  \Psi_{N,j},\varphi_{n}^{\ast}\right)  -\left(  2n+1\right)  \gamma
_{4}\left(  \Psi_{N,j},\cos\left(  2n+1\right)  \pi x\right)  =\left(
q\left(  x\right)  \Psi_{N,j},\varphi_{n}^{\ast}\right)
\end{equation}
respectively, where
\[
\gamma_{4}=\frac{8\pi\left(  1-\alpha\right)  }{1+\alpha}.
\]
Instead of (16)-(19) using (81)-(84) and arguing as in the proofs of Theorem 1
and Theorem 2 we obtain the following results for $T_{4}.$

\begin{theorem}
If (9) holds, then the large eigenvalues $\lambda_{n,j,4}$ are simple and
satisfy the following asymptotic formulas%
\begin{equation}
\lambda_{n,j,4}=\left(  \left(  2n+1\right)  \pi\right)  ^{2}+\left(
-1\right)  ^{j}\frac{\sqrt{2\gamma_{4}}}{2}\sqrt{\left(  2n+1\right)
s_{2n+1}}(1+o(1)).
\end{equation}
for $j=1,2.$ The eigenfunctions $\Psi_{n,j,4}$ corresponding to $\lambda
_{n,j,4}$ obey
\begin{equation}
\Psi_{n,j,4}=\sqrt{2}\sin\left(  2n+1\right)  \pi x+O\left(  n^{-1/2}\right)
.
\end{equation}
Moreover, if there exists a sequence $\left\{  n_{k}\right\}  $ such that (9)
holds when $n$ is replaced by $n_{k},$ then the root functions of $T_{4}$ do
not form a Riesz basis.
\end{theorem}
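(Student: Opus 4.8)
The plan is to follow the proofs of Theorem 1 and Theorem 2 essentially verbatim, interchanging the roles of sine and cosine, replacing $2\pi n$ by $(2n+1)\pi$ and $\gamma_{1}$ by $\gamma_{4}$, and using (81)--(84) in place of (16)--(19). First I would expand the normalized eigenfunction $\Psi_{n,j,4}$ by the basis (82) as $\Psi_{n,j,4}=u_{n,j}\psi_{n}+v_{n,j}\cos(2n+1)\pi x+h_{n,j}$, where $\psi_{n}$ is the second element of (82), $u_{n,j}=(\Psi_{n,j,4},\sin(2n+1)\pi x)$, $v_{n,j}=(\Psi_{n,j,4},\varphi_{n}^{\ast})$, and $h_{n,j}$ collects the off-diagonal terms. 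Using (83), (84) in place of (18), (19) together with the obvious analogues of (20)--(25), I would get $\sup|h_{n,j}|=O(\ln n/n)$ exactly as in (28), hence $\Psi_{n,j,4}=u_{n,j}\psi_{n}+v_{n,j}\cos(2n+1)\pi x+O(\ln n/n)$, and a normalization identity $a'|u_{n,j}|^{2}+\tfrac12|v_{n,j}|^{2}=1+O(\ln n/n)$ with a nonzero constant $a'$ depending only on $\alpha$.

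Next, iterating (83), (84) $k$ times exactly as in the derivation of (31) and (35), I would obtain the analogue of the system (39)--(40), with $2\pi n$, $\gamma_{1}$ and the quantities $Q_{n},P_{n}^{\ast},P_{n},Q_{n}^{\ast}$ replaced by $(2n+1)\pi$, $\gamma_{4}$ and the corresponding integrals computed against the functions of (81)--(82); the correction terms $A,A',B,B'$ remain $O(\ln n/n)$. With this system, the analogue of Theorem 1 follows: from $\lambda_{n,j,4}-((2n+1)\pi)^{2}=O(n^{1/2})$ one gets $u_{n,j}=O(n^{-1/2})$, hence $\Psi_{n,j,4}=\sqrt{2}\sin(2n+1)\pi x+O(n^{-1/2})$, which is (88); a second independent eigenfunction would have to behave like $\sqrt{2}\cos(2n+1)\pi x+o(1)$, contradicting (88), so the geometric multiplicity of $\lambda_{n,j,4}$ is $1$ for $n>N$. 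The Rouch\'{e} argument of Theorem 1(b), applied on the boundary of $U(n)=\{|\lambda-((2n+1)\pi)^{2}|\leq 2n+1\}$, then shows that the eigenvalues in $U(n)$ are exactly the roots of the quadratic-type equation obtained by eliminating $u_{n,j},v_{n,j}$.

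I would then examine the discriminant $\Delta(\lambda)$ of that equation, in the form analogous to (59). Using the recomputed $P_{n},Q_{n}^{\ast}$ and hypothesis (9), the dominant contribution is $(P_{n}+B)((2n+1)\gamma_{4}+Q_{n}^{\ast}+B')$, and since here $P_{n}$ equals, up to a constant, $s_{2n+1}$ — the relevant Fourier coefficient being the odd sine coefficient because the two boundary functions oscillate at frequency $(2n+1)\pi$, so their product carries frequency $2(2n+1)\pi$ — one gets $\Delta(\lambda_{n,j,4})=2\gamma_{4}(2n+1)s_{2n+1}(1+o(1))$, the analogue of (62). Feeding this into the two branches (analogues of (60)--(61)) produces (87) up to the sign. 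To upgrade to genuine simplicity I would run the contraction argument of Theorem 2: the Lipschitz bounds $|A(\lambda)-A(\mu)|<K_{1}|\lambda-\mu|$, $|A'(\lambda)-A'(\mu)|<K_{2}|\lambda-\mu|$ as in [26], and, using $(\sqrt{\Delta})^{-1}=o(1)$ from (9) together with $\frac{d}{d\lambda}\Delta=O(1)$, the bound $|\sqrt{\Delta(\lambda)}-\sqrt{\Delta(\mu)}|<K_{3}|\lambda-\mu|$ with $K_{1}+K_{2}+K_{3}<1$; each branch then has a unique fixed point, the two differ by (87), and Theorem 1(b) identifies them as $\lambda_{n,1,4}$ and $\lambda_{n,2,4}$. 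Finally, for the Riesz basis claim, along a subsequence $n_{k}$ where (9) holds, (88) gives $(\Psi_{n_{k},1,4},\Psi_{n_{k},2,4})=1+O(n_{k}^{-1/2})$, so the normalized root functions of the $n_{k}$-th pair become asymptotically collinear; by the theorems of Shkalikov [20,21] (equivalently Lemma 3 of [24]) this precludes the Riesz basis property.

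The step I expect to be the real work is the bookkeeping behind the discriminant: recomputing $Q_{n},P_{n}^{\ast},P_{n},Q_{n}^{\ast}$ from the $T_{4}$ basis (81)--(82), verifying that the $s_{2n+1}$-term indeed survives as the leading term of $\Delta$ (rather than a competing $c_{2n+1}$ or constant term), and pinning down the constant $\gamma_{4}$ in (87); everything else is a transcription of the arguments already established for $T_{1}$.
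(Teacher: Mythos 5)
Your proposal takes exactly the route the paper intends: the paper gives no separate proof of this theorem, only the directive to argue as in Theorems 1 and 2 with (81)--(84) in place of (16)--(19), $(2n+1)\pi$ in place of $2\pi n$ and $\gamma_{4}$ in place of $\gamma_{1}$, and your outline does precisely that, including the key identification $P_{n}=\tfrac12 s_{2n+1}$ and $\Delta(\lambda_{n,j,4})=2\gamma_{4}(2n+1)s_{2n+1}(1+o(1))$, the contraction/Rouch\'e step, and the collinearity-plus-Shkalikov argument for the failure of the Riesz basis property. One transcription slip to fix: in the expansion by the basis (82), the eigenfunction-direction term is $v_{n,j}\sin(2n+1)\pi x$ and the coefficient that carries the factor $\gamma_{4}(2n+1)$ in (84) --- and is therefore $O(n^{-1/2})$ --- is $u_{n,j}=(\Psi_{n,j,4},\cos(2n+1)\pi x)$, the coefficient of the associated (cosine-type) element; with your stated labels ($u_{n,j}$ the sine coefficient and $\cos(2n+1)\pi x$ as the second basis direction) the conclusion $\Psi_{n,j,4}=\sqrt{2}\sin(2n+1)\pi x+O(n^{-1/2})$ would not follow, although the rest of your argument clearly uses the correct assignment.
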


\begin{remark}
Suppose that
\begin{equation}
\int_{0}^{1}xq\left(  x\right)  dx\neq0.
\end{equation}
If
\begin{equation}
\frac{1}{2}s_{2n}+B=o\left(  \frac{1}{n}\right)  ,
\end{equation}
where $B$ is defined by (34), then arguing as in the proof of Theorem 2, we
obtain that the large eigenvalues of the operator $T_{1}$ are simple. Moreover
if there exists a sequence $\left\{  n_{k}\right\}  $ such that (88) holds
when $n$ is replaced by $n_{k},$ then the root functions of $T_{1}$ do not
form a Riesz basis. The similar results can be obtained for the operators
$T_{2},T_{3}$ and $T_{4}.$
\end{remark}

\begin{remark}
Using (31) and (35) and arguing as in the proof of Theorem 3 of [1] it can be
obtained asymptotic formulas of arbitrary order for the eigenvalues and
eigenfunctions of the operator $T_{1}.$ The similar formulas can be obtained
for the operators $T_{2},T_{3}$ and $T_{4}.$
\end{remark}

\end{document}